\newtheorem{thm}{Theorem}[section]
\newtheorem{prop}[thm]{Proposition}
\newtheorem{conj}[thm]{Conjecture}
\theoremstyle{definition}
\newtheorem{defn}[thm]{Definition}
\theoremstyle{remark}
\newcommand{\example}[1]{\mathbf{A}_{#1}} % The two running examples for A?
\newcommand{\examplep}[1]{\mathbf{Q}_{#1}} % The two running examples for Q?
\newcommand{\shpen}[2]{U_{{#1},{#2}}} %The space of sharpenned pencils before quotients.
\newcommand{\qshpen}[2]{\mcu_{{#1},{#2}}}
\newcommand{\fs}[1]{\mathcal{F}^\rightharpoonup (#1 )}
\newcommand{\ptgroup}[1]{\mathbb{L}_{#1}} %group acting as morphism group for the stack.
\newcommand{\tgroup}[1]{\mathbb{G}_{#1}} % torus group acting on the stack
\newcommand{\linsys}[1]{{\mcl_{#1}}} %the linear system associated to $A$
\newcommand{\linsysfull}[1]{{\mcl_{#1}^f}} %linear system of full sections
\newcommand{\linsysveryfull}[1]{{\mcl_{#1}^{vf}}} %linear system of full sections
\newcommand{\du}[1]{{\overline{#1}}} % the set of hyperplane supporting functions 
\newcommand{\psec}[1]{{\Sigma (#1 )}}
\newcommand{\ptlaf}[1]{{\Theta (#1 )}} % Lafforgue polytope
\newcommand{\fans}[1]{{\mcf_{\Sigma (#1 )}}} %Secondary fan
\newcommand{\fanlt}[1]{{\mcf_{\ptlaf{#1}}}} % Lafforgue fan dual to \ptlaf
\newcommand{\mlg}[2]{{\mathcal{M}_{#1 , #2}}} % The moduli stack of LG models
\newcommand{\laf}[1]{{\mathcal{X}}_{\ptlaf{ #1 }}} %Lafforgue stack 
\newcommand{\secon}[1]{\mathcal{X}_{\Sigma (#1 )}} %Secondary stack,
\newcommand{\hyp}[1]{\mathcal{Y}_{#1}} % The universal hypersurface in Lafforgue
\newcommand{\fib}[2]{\mathcal{Z}_{#1} (#2 )} % The fiber hypersurface in Lafforgue
\newcommand{\orb}[1]{{\text{orb} (#1 )}}
\newcommand{\convhull}{{\text{Conv}}}
\newcommand{\ext}{{\text{Ext}}}
\newcommand{\Hom}{{\text{Hom}}}
\newcommand{\tor}{{\text{Tor}}}
\newcommand{\cone}{{\text{Cone}}}
\newcommand{\Span}{{\text{Span}}}
\newcommand{\R}{\mathbb{R}}
\newcommand{\C}{\mathbb{C}}
\newcommand{\p}{\mathbb{P}}
\newcommand{\Z}{\mathbb{Z}}
\newcommand{\mcb}{\mathcal{B}}
\newcommand{\mcc}{\mathcal{C}}
\newcommand{\mcd}{\mathcal{D}}
\newcommand{\mce}{\mathcal{E}}
\newcommand{\mcf}{\mathcal{F}}
\newcommand{\mch}{\mathcal{H}}
\newcommand{\mci}{\mathcal{I}}
\newcommand{\mcl}{\mathcal{L}}
\newcommand{\mco}{\mathcal{O}}
\newcommand{\mcp}{\mathcal{P}}
\newcommand{\mcq}{\mathcal{Q}}
\newcommand{\mcm}{\mathcal{M}}
\newcommand{\mcr}{\mathcal{R}}
\newcommand{\mcs}{\mathcal{S}}
\newcommand{\mcu}{\mathcal{U}}
\newcommand{\mcv}{\mathcal{V}}
\newcommand{\mcx}{\mathcal{X}}
\newcommand{\mcy}{\mathcal{Y}}
\newcommand{\mcz}{\mathcal{Z}}
\begin{document}

\title{Compactifications of spaces of Landau-Ginzburg models}

%    Remove any unused author tags.

%    author one information
\author[C. Diemer]{Colin Diemer}
\address{Department of Mathematics, University of Miami, Coral Gables, FL, 33146, USA}
%\curraddr{}
\email{diemer@math.miami.edu}
%\thanks{colin}

%    author two information
\author[L. Katzarkov]{Ludmil Katzarkov}
\address{Fakult\"at f\"ur Mathematik , Universit\"at Wien, 1090 Wien, Austria}
\email{ludmil.katzarkov@univie.ac.at}
%\thanks{NSF Grant DMS-0901330, NSF FRG Grant DMS-0854977}

%   author three information

\author[G. Kerr]{Gabriel Kerr}
\address{Department of Mathematics, University of Miami, Coral Gables, FL, 33146, USA}
%\curraddr{}
\email{gdkerr@math.miami.edu}
%\thanks{My grant}

%\subjclass[2010]{Primary }
%    For articles to be published after 1 January 2010, you may use
%    the following version:
%\subjclass[2010]{Primary }

%\keywords{}

%\date{}

%\dedicatory{}

\begin{abstract}
This paper reviews results and techniques from \cite{DKK} and applies them in basic examples of Landau-Ginzburg models. The main example is the $A_n$ category where we observe a relationship to stability conditions and directed quiver representations. We conclude with a brief survey of applications to the birational geometry of del Pezzo surfaces.
\end{abstract}

\maketitle
\section{Introduction}

One case of homological mirror symmetry is an equivalence between the derived category of coherent sheaves on a Fano variety $X$ and the Fukaya-Seidel category of its mirror Landau-Ginzburg, or LG, model $\mathbf{w} : X^{mir} \to \C$. There are many constructions of the mirror \cite{auroux}, \cite{HV} but all depend on a choice of symplectic form on $X$. Moving within the complexified K\"ahler cone of $X$ gives an open parameter space of mirror LG models. While the Fukaya-Seidel categories of any two  mirror LG models from this space are equivalent, we may assign distinct exceptional collections and semi-orthogonal decompositions to certain regions. We observe that these decompositions should be related to the space of stability conditions of $D^b (X)$. For more on LG models from this vantage point, see \cite{HKK}, \cite{IKS} , \cite{KKSP} and \cite{CP}.

In \cite{DKK}, \cite{Kerr}, the authors examine this phenomena in the toric context and compactify the space of LG models into a toric stack $\mlg{A}{A^\prime}$ using methods from \cite{GKZ}, \cite{Lafforgue}. The boundary of this stack gives degenerations of the LG models where both the fiber and the base of the model degenerate. Examining the fixed points of $\mlg{A}{A^\prime}$, we see a LG model decompose into a chain of regenerated circuit LG models. In \cite{DKK} we considered the symplectic topology of these degenerated pieces and observed that, under homological mirror symmetry, they correspond to semi-orthogonal components of $D^b(X)$ obtained by running the toric minimal model program on the mirror toric Fano $X$. The mirror symmetric decomposition is a run of the minimal model program on $X$. We expect that this type of correspondence between Mori theoretic semi-orthogonal decompositions and degenerated Landau-Ginzburg models holds in much more generality, leading to a new approach to birational geometry.

In addition to reviewing the definitions and theorems in the approach outlined above, we give a detailed account of a basic example, which is still rich in structure.  Here our LG models are simply single variable degree $(n + 1)$ polynomials $f:\mathbb{C}\rightarrow\mathbb{C}$, and the Newton polytope is simply the interval $[0,n + 1]$. Because the fibers are finite sets, this allows us to set aside much of the technical symplectic topology in \cite{DKK}. The secondary polytope of the interval was investigated in \cite{GKZ}, where it was shown to be a cube whose lattice structure is strongly related to $A_n$ representation theory. We investigate the universal family over the toric stack of this cube which was identified with a quotient of the Losev-Manin space in  \cite{blume}.  Finally, we analyze the monotone path polytope in this setting, as well as the combinatorial structure of the vanishing thimbles near the degenerated LG models. After this careful study, we observe connections with the classical representations of $A_n$ quivers and an interpretation of reflection functors as  wall crossing in the space of stability conditions of the $A_n$ category.

In the final section of the paper, we explore the homological mirror of the three point blow up $X$ of $\p^2$. We build on the work of \cite{kalo} which studied relations between Sarkisov links. In the usual setup of Sarkisov links, earlier contractions do not play a prominent role. We explain how the toric compactification of the LG model mirror of $X$ preserves this data and gives a more complete picture of the minimal model program for $X$.

\vspace{2mm}{\em Acknowledgements:}  The authors would like to thank D. Auroux, M. Ballard, C. Doran, D. Favero, M. Gross, F. Haiden, A. Iliev, S. Keel, M. Kontsevich, J. Lewis, T. Pantev, C. Prizhalkovskii, H. Rudatt, E. Scheidegger, Y. Soibelman and G. Tian for valuable comments and suggestions.

\section{\label{sec:toric} Toric Landau-Ginzburg models}

In this section we review constructions from \cite{DKK} which compactify the moduli of hypersurfaces in a toric stack and a moduli space of LG models. This is followed by a detailed definition of radar screens, which are distinguished bases for the LG models designed to preserve categories in the degenerated models. The choices involved in defining these bases are condensed into a torsor over the monotone path stack.

\subsection{\label{sec:tslg} Toric stacks and LG models}
We start by introducing the toric machinery that we need for the rest of the paper. Letting $M$ be a rank $d$ lattice and $A$ a finite subset in $ M$, we take $\du{A} \subset N$ to be the collection of primitives normal vectors to facets of $Q = \convhull ( A)$. Here we use the usual notation of $N = \Hom (M , \Z)$ and write $\convhull (A)$ for the convex hull of a set of points. The normal fan $\mcf_Q$ of $Q$ has $\du{A}$ as the set of generators for one cones and defines an abstract simplicial complex structure on the set $\du{A}$. We take the toric variety $X_Q$ to be the variety associated to $\mcf_Q$. 

To promote this to a toric stack, we follow the prescription given in \cite{BCS} and \cite{Cox}. Define $U_Q \subset \C^{\du{A}}$ to be the open toric variety given by taking the fan in $\R^{\du{A}}$ consisting of cones $ \cone \{e_\alpha : \alpha \in \sigma\}$ where $\sigma$ is any cone in the normal fan $\mcf_Q$ of $Q$.  The map $\beta_{\du{A}} : \Z^{\du{A}} \to N$ is defined to take $e_\alpha$ to $\alpha$ and we write its kernel  and cokernel as $L_{\du{A}}$ and $K_{\du{A}}$. Define the group $\ptgroup{Q} = (L_{\du{A}} \otimes \C^*) \oplus \tor (K_{\du{A}}, \C^* )$ as a subgroup of $(\C^*)^{\du{A}}$ using the inclusion and connecting homomorphism  to obtain the stack
\begin{equation*} 
\mcx_Q = [U_Q / \ptgroup{Q}] .
\end{equation*}
The variety $X_Q$ is the coarse space of $\mcx_Q$. As in the case of toric varieties defined from polytopes, the stack $\mcx_Q$ comes equipped with a line bundle $\mco_Q (1)$. Letting $Q_\Z = Q \cap M$, the space of sections $H^0 (\mcx_Q , \mco_Q (1))$ has an equivariant basis $\{s_\alpha :\alpha \in Q_\Z \}$ and a linear system $\linsys{A} = \Span \{s_\alpha : \alpha \in A\}$. We distinguish two open subsets of $\linsys{A}$, the full sections
\begin{equation*} \linsysfull{A} =  \{s = \sum c_\alpha s_\alpha : c_\alpha \ne 0 , \text{ for all vertices } \alpha \in Q\}, \end{equation*}
and the very full sections 
\begin{equation*} \linsysveryfull{A} = \{s = \sum c_\alpha s_\alpha : c_\alpha \ne 0 \text{ for every } \alpha \in A\}. \end{equation*}

\begin{figure}
\begin{picture}(0,0)%
\includegraphics{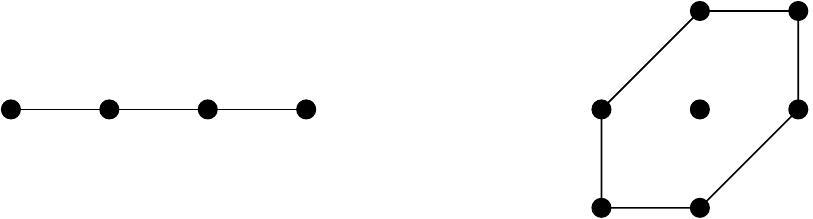}%
\end{picture}%
\setlength{\unitlength}{4144sp}%
\begingroup\makeatletter\ifx\SetFigFont\undefined%
\gdef\SetFigFont#1#2#3#4#5{%
  \reset@font\fontsize{#1}{#2pt}%
  \fontfamily{#3}\fontseries{#4}\fontshape{#5}%
  \selectfont}%
\fi\endgroup%
\begin{picture}(3700,998)(2876,-3710)
\end{picture}%
\caption{\label{fig:examples1} The sets $\example{1}$ and $\example{2}$.}
\end{figure}

As illustrated in Figure \ref{fig:examples1}, we take the sets $\example{1}  = \{0, 1, 2, 3\} \subset \Z $ and $\example{2}  = \{(0, 0), (-1, -1), (-1, 0), (0, 1), (1, 1), (1, 0), (0, -1)\} \subset \Z^2$. The first example gives $X_{\examplep{1}} = \p^1$, with line bundle $\mco_{\example{1}} (1) = \mco (3)$ and the linear system $\linsys{\example{1}}$ consists of all sections. The full sections $\linsysfull{\example{1}}$ are those that do not vanish at the torus fixed points $0$ and $\infty$. The second example $X_{\examplep{2}}$ is a $3$ point blow up of $\p^2$. The bundle $\mco_{\examplep{2}} (1)$ in this case is the anti-canonical bundle and the linear system again consists of all sections.

Many LG models arising in homological mirror symmetry are obtained from pencils on $\mcx_Q$ contained in $\linsys{A}$. It is common for the behavior of these pencils at infinity and zero to be prescribed. We now give a concise definition of this constraint.  
\begin{defn} Let $A^\prime$ be a proper subset of $A$. An $A^\prime$-sharpened pencil on $\mcx_Q$ is a pencil $W  \subset \linsys{A}$ which has a basis $\{s_1, s_\infty\}$ for which $s_1 \in \linsysveryfull{A}$ and $s_\infty = \sum_{\alpha \in A^\prime} c_\alpha s_\alpha$. Let $\shpen{A}{A^\prime}$  be the open subset of $A^\prime$-sharpened pencils in the Grassmannian $Gr_2 (H^0 (\mcx_Q , \mco_Q (1)))$.
\end{defn}
Let us examine two other equivalent ways of defining an $A^\prime$-sharpened pencil. If $s_1 = \sum_{\alpha \in A} c_\alpha s_\alpha \in W \cap \linsysveryfull{A}$, then take $s_0 = \sum_{\alpha \not\in A^\prime} c_\alpha s_\alpha$ and $s_\infty = \sum_{\alpha \in A^\prime} c_\alpha s_\alpha$. The pair $(s_0, s_\infty) \in \C^{(A^\prime)^\circ} \times \C^{A^\prime}$ gives another basis for $W$ which is unique up to a multiple $(\lambda s_0 , \lambda s_\infty)$ for some $\lambda \in \C^*$. We define
\begin{equation*} \mathbf{w} = [s_0 : s_\infty] : \mcx_Q - \{ s_\infty = 0\} \to \C \end{equation*}
to be the Landau-Ginzburg model of the $A^\prime$-sharpened pencil $W$.

Alternatively, we may write $W \in \shpen{A}{A^\prime}$ as the closure of an equivariant map, or orbit, $i_W : \C^* \to \linsysveryfull{A}$. Taking the one parameter subgroup $G_{A^\prime} \subset (\C^*)^A$ given by the cocharacter $\gamma_{A^\prime} = \sum_{\alpha \in A^\prime} e^\vee_\alpha \in (\Z^A)^\vee$ and any very full section $s \in W$, observe that $W = \overline{\{\lambda \cdot s : \lambda \in G_{A^\prime}\}}$. When referring to an $A^\prime$-sharpened pencil, we may utilize any one of these three equivalent viewpoints. As we will observe in the next section, the orbit perspective turns out to be quite useful.

In general, the fibers of $\mathbf{w}$ over $0$ and $\infty$ have bad behavior which is corrected by judicious blow ups. We explain this bad behavior from a global perspective. Let $\mcd_Q = \sum \mcd_i$, where the sum is over the facets of $Q$, be the toric boundary of $\mcx_Q$  and for any subset $J$ of facets, let $\mcz_J = \cap_{j \in J} \mcd_j$. If  $s \in \linsys{A}$, write $\mcy_s$ for the hypersurface defined by $s$ and $\mcy_{s, J} = \mcy_s \cap \mcz_J$. For any subset $U \subset \linsys{A}$, we have the incidence stacks $\mci (U) \subset U \times \mcx_Q$ and $\mci_J \subset U \times \mcx_Q$ whose points are given by pairs $\{(s, y) : s \in U, y \in \mcy_s\}$ and $\{(s, y ) : s \in U, y \in \mcy_{s, J} \}$ respectively. 
\begin{prop} The set $U = \linsysfull{A}$ is the maximal open subset of $\linsys{A}$ for which the projection $\pi_{\linsys{A}} : \mci_J (U) \to U$ is flat for all subsets $J$.
\end{prop}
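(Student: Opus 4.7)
The plan is to reinterpret $\mci_J(U)$ as the zero locus of a tautological section and reduce flatness to pointwise nonvanishing on the fibers. Concretely, the inclusion $U \subset \linsys{A} \subset H^0(\mcx_Q, \mco_Q(1))$ produces a universal section $\tilde s$ of $p_2^\ast \mco_Q(1)$ on $U \times \mcx_Q$, and intersecting with the closed substack $U \times \mcz_J$ identifies $\mci_J(U)$ with $V(\tilde s|_{U \times \mcz_J})$. Since $U$ is smooth, $\mcz_J$ is an integral (Cohen--Macaulay) toric substack, and $U \times \mcz_J \to U$ is smooth, the standard effective Cartier divisor criterion gives that $\mci_J(U) \to U$ is flat if and only if $\tilde s$ restricts to a nonzero section of $\mco_Q(1)|_{\mcz_J}$ on every fiber $\{s\} \times \mcz_J$.

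I would then translate this nonvanishing condition into conditions on the coefficients of $s$. Because the $s_\alpha$ are equivariant, $s_\alpha|_{\mcz_J}$ vanishes identically whenever $\alpha$ lies outside the face $F_J$ of $Q$ cut out by $J$, while the restrictions $\{s_\alpha|_{\mcz_J} : \alpha \in F_J \cap A\}$ pull back from the equivariant basis of $H^0(\mcz_J, \mco_{\mcz_J}(1))$ and are linearly independent. Writing $s = \sum c_\alpha s_\alpha$, this shows $s|_{\mcz_J} \equiv 0$ exactly when $c_\alpha = 0$ for every $\alpha \in F_J \cap A$. Hence $\pi_{\linsys{A}}$ is flat on $U$ for every $J$ if and only if, for every face $F$ of $Q$, some $\alpha \in F \cap A$ has $c_\alpha \ne 0$.

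Matching this with $\linsysfull{A}$ takes only two easy observations. Applying the condition to a vertex face $F = \{v\}$ forces $c_v \ne 0$ for every vertex $v$, which is precisely the defining condition of $\linsysfull{A}$, so the flatness locus is contained in $\linsysfull{A}$. Conversely, since $Q = \convhull(A)$, every face $F$ of $Q$ contains a vertex $v \in A$, and if $s \in \linsysfull{A}$ then $c_v \ne 0$ witnesses the condition on $F$; thus $\linsysfull{A}$ is contained in the flatness locus. For maximality, if $c_v = 0$ at some vertex, taking $J$ to be the facets through $v$ makes $\mcz_J$ the zero-dimensional stratum at $v$, and $\mci_J(\linsys{A})$ coincides with the hyperplane $\{c_v = 0\} \subset \linsys{A}$, whose inclusion is visibly non-flat at $s$.

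The one point requiring genuine care is the flatness criterion invoked in the first paragraph, since $\mcz_J$ is in general a DM stack with nontrivial inertia. I would handle this by passing to a smooth atlas or by invoking the stacky version of miracle flatness: as $\mcz_J$ is Cohen--Macaulay and $U \times \mcz_J$ is smooth over $U$, a Cartier divisor in $U \times \mcz_J$ is flat over $U$ exactly when its fibers are equidimensional, which (since $\mcz_J$ is integral) is the nonvanishing of $s|_{\mcz_J}$. Everything downstream is combinatorial bookkeeping on faces of $Q$.
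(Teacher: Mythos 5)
Your proof is correct and is essentially an expanded, rigorous version of the paper's brief remark: the paper's sketch only observes that non-full sections contain a zero-dimensional $\mcz_J$ (hence kill flatness), while you supply both directions by recasting $\mci_J(U)$ as the vanishing locus of a tautological section on $U \times \mcz_J$ and invoking the relative Cartier divisor criterion over the integral, Cohen--Macaulay $\mcz_J$. The core combinatorial observation --- that $s$ vanishes identically on $\mcz_J$ iff $c_\alpha = 0$ for all $\alpha$ on the corresponding face, and that checking this at the vertex strata is exactly the definition of $\linsysfull{A}$ --- is the same one the paper gestures at.
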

This follows from the observation that the sections which are not full are equivalent to sections that contain fixed points of the toric action. Thus they contain zero dimensional intersections $\mcz_J$. For our purposes, a reasonable moduli space of sections should not exhibit this behavior. In the next subsection, we modify these sections along with their fibers in order to obtain a proper flat family. 

\subsection{The secondary stack}

To remedy the fact that the incidence varieties give a poorly behaved parameter space for the hypersurface, we review the constructions of the secondary and Lafforgue stacks given in \cite{DKK}, where more details can be found. We assume the reader is familiar with material found in \cite{BCS}, \cite{Cox} and \cite{GKZ}. Given $A$ as above, the secondary polytope $\psec{A} \subset \R^A$ is an $(|A| - d - 1)$-dimensional polytope whose faces correspond to regular subdivisions $S = \{(Q_i, A_i) : i \in I\}$ of $A$. The normal fan $\fans{A}$ of $\psec{A}$ can be refined to a fan $\fanlt{A}$ as in \cite{hacking} and \cite{Lafforgue} by considering pairs $( S, Q^\prime )$ of a subdivision $S$ along with a set $Q^\prime$ which is a face of a subdivided polytope $(Q_i , A_i) \in S$. Then a cone $\sigma_{(S, Q^\prime )} $ in $\fanlt{A}$ is defined as all functions $\eta$ on $A$ whose lower convex hull gives the marked subdivision $S$ and whose minimum is achieved on $Q^\prime \cap A$. 
\begin{prop}[\cite{DKK}] If $\Delta^{A} \subset \R^A$ is the unit simplex, then $\fanlt{A}$ is the normal fan of the Minkowski sum $\ptlaf{A} := \psec{A} + \Delta^A \subset \R^A$.
\end{prop}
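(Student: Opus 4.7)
The plan is to invoke the standard fact from polytope theory that the normal fan of a Minkowski sum $P_1 + P_2$ is the common refinement $\mcf_{P_1} \wedge \mcf_{P_2}$, so that the proposition reduces to showing $\fanlt{A} = \fans{A} \wedge \mcf_{\Delta^A}$. The first step is therefore to describe the normal fan of the unit simplex $\Delta^A = \convhull \{e_\alpha : \alpha \in A\} \subset \R^A$ concretely: its faces are indexed by nonempty subsets $B \subseteq A$, and the normal cone to the face $\convhull\{e_\alpha : \alpha \in B\}$ is
\begin{equation*}
\tau_B = \{\eta \in \R^A : \eta_\alpha \leq \eta_\gamma \text{ for all } \alpha \in B,\ \gamma \in A\},
\end{equation*}
i.e., the closed cone of functions whose minimum over $A$ is attained on a set containing $B$. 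The normal fan $\fans{A}$ has maximal cones $\sigma_S$ indexed by regular subdivisions $S$ of $A$, where $\sigma_S$ consists of those $\eta$ whose lower convex hull induces $S$ (this is the standard GKZ description recalled in the preceding text).

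Next I would identify the cones of the common refinement as the nonempty intersections $\sigma_S \cap \tau_B$ and match them with the pairs $(S, Q')$ from the definition of $\fanlt{A}$. Given $\eta \in \sigma_S$, let $B = \{\alpha \in A : \eta_\alpha = \min_{A} \eta\}$; then $\eta$ lies in the relative interior of $\sigma_S \cap \tau_B$. In the lifted picture $\{(\alpha, \eta_\alpha)\}_{\alpha \in A} \subset \R^{d+1}$, the set $B$ is the contact locus with the lowest horizontal hyperplane, so $\convhull(B)$ is a face of the lower envelope and hence a face $Q'$ of some cell $(Q_i, A_i) \in S$ with $A_i \supseteq B = Q' \cap A$. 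Conversely, for any such pair $(S, Q')$ one can perturb an $\eta_0$ inducing $S$ by a small vector supported so that the minimum migrates onto $Q' \cap A$, producing a relative-interior point of $\sigma_S \cap \tau_B$. This bijection is exactly the indexing of $\fanlt{A}$, and comparing the defining inequalities shows $\sigma_S \cap \tau_B = \sigma_{(S, Q')}$.

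The main obstacle is the verification that the loci $\{\sigma_S \cap \tau_B\}$ actually form a fan and that this fan coincides with the $\sigma_{(S,Q')}$ globally — in particular, checking that when $Q'$ fails to be a face of a cell of $S$ the intersection $\sigma_S \cap \tau_B$ has strictly lower dimension and is absorbed into a cone coming from a genuine face pair, so that no extraneous cones appear. This is a matter of combining the lifted-polytope argument above with the characterization of facets of $\sigma_S$ (coming from circuit flips of $S$) and facets of $\tau_B$ (coming from enlarging $B$); the resulting incidence relations among the pairs $(S, Q')$ are exactly those encoded in the Minkowski cell structure of $\ptlaf{A}$, completing the identification.
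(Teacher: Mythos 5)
The paper cites \cite{DKK} for this proposition and gives no in-paper proof, so there is nothing to compare your argument against directly; but your approach --- reducing everything to the standard fact that the normal fan of a Minkowski sum is the common refinement of the two normal fans, and then matching the cones of $\fans{A} \wedge N_{\Delta^A}$ with the pairs $(S, Q')$ defining $\fanlt{A}$ --- is the natural route, and the argument you give is essentially correct. A few remarks. Your final paragraph overstates the remaining difficulty: that $\{\sigma_S \cap \tau_B\}$ forms a fan is automatic (that is what ``common refinement'' means), and the worry about extraneous cones disappears once you index each cone $C$ of the refinement by its canonical pair: take $\eta$ in the relative interior of $C$, let $S$ be the subdivision it induces and $B$ its argmin set; your lifted-polytope argument then shows this $B$ is always the marking of a face $Q'$ of some cell of $S$, so non-face pairs simply never occur as canonical labels, and there is no absorption step to check. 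Two smaller imprecisions worth noting: you write that the maximal cones $\sigma_S$ are indexed by regular subdivisions, but maximal cones correspond to regular triangulations (cones of all dimensions are indexed by subdivisions) --- immaterial here since you work with general $S$; and you write $B = Q' \cap A$, but if the cell $(Q_i, A_i) \in S$ has $A_i \subsetneq Q_i \cap A$, a point of $Q' \cap A$ lifted strictly above the lower envelope is not in the argmin, so $B$ is really the marking $Q' \cap A_i$, and the definition of $\fanlt{A}$ should be read with that convention.
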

\begin{figure}
\begin{picture}(0,0)%
\includegraphics{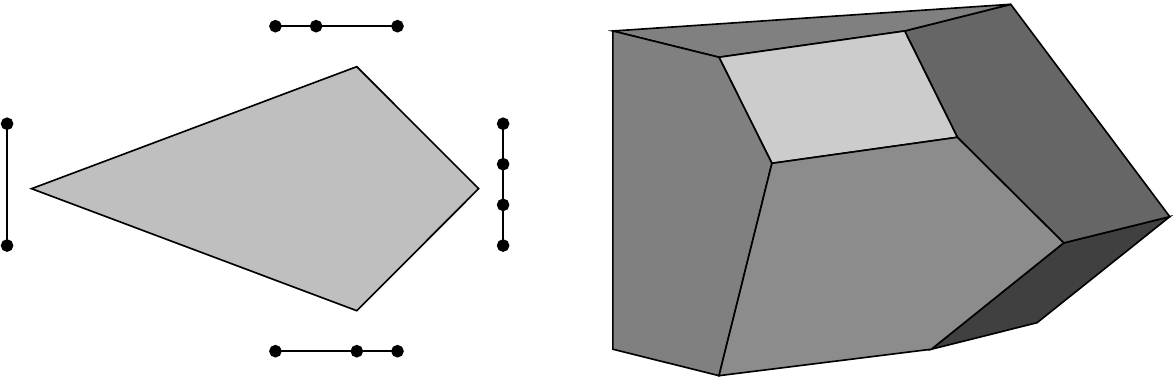}%
\end{picture}%
\setlength{\unitlength}{4144sp}%
\begingroup\makeatletter\ifx\SetFigFont\undefined%
\gdef\SetFigFont#1#2#3#4#5{%
  \reset@font\fontsize{#1}{#2pt}%
  \fontfamily{#3}\fontseries{#4}\fontshape{#5}%
  \selectfont}%
\fi\endgroup%
\begin{picture}(6800,2214)(278,-3223)
\end{picture}%
\caption{\label{fig:example2} The secondary polytope and Lafforgue polytope for $\example{1}$}
\end{figure}
As the secondary polytope $\psec{A}$ lies in a translation of the kernel $L_A \otimes \R := \ker (\R^A \to M_\R \oplus \R)$, it enjoys $(d + 1)$ constraints. Similarly, the polytope $\ptlaf{A}$, which we call the Lafforgue polytope, lies in a translation of the hyperplane $H = \{\sum_\alpha c_\alpha e_\alpha : \sum_{\alpha} c_\alpha = 0 \}$ and therefore has $(|A| - 1)$ dimensions. This drastically limits the number of examples of Lafforgue polytopes that one can visualize, but our case $\example{1}$ rendered in Figure \ref{fig:example2} gives an indication of the relationship between the original marked polytope $(Q, A)$, the secondary polytope $\psec{A}$ and Lafforgue polytope $\ptlaf{A}$. Observe that to each vertex of the secondary polytope, there is a regular triangulation of $(Q, A)$ which can be seen as a unique subset of the facets of the Lafforgue polytope. The second example, $\example{2}$ has a $4$-dimensional secondary polytope with $32$ vertices and a $7$-dimensional Lafforgue polytope. Nevertheless, we will be able to use a polytope derived from this data to analyze the minimal model runs of the homological mirror of $\mcx_{\examplep{2}}$ in the last section.

Since $\ptlaf{A}$ is a Minkowski sum, we have maps $\tilde{\pi}_{A} : X_{\ptlaf{A}} \to X_{\psec{A}}$ and $\tilde{\pi}_Q : X_{\ptlaf{A}} \to \p^{|A| - 1}$. If $i: p \hookrightarrow X_{\psec{A}}$ sends a point to the orbit $\orb{S}$ associated to a subdivision $S = \{(Q_i , A_i ) : i \in I\}$, then we may define $X_S$ as the pullback in the fiber square
\begin{equation} \label{eq:fundex}
\begin{CD}
X_S @>j>> X_{\ptlaf{A}} @>{\tilde{\pi}_Q}>> \p^{|A| - 1} \\ @V{\rho_S}VV @V{\tilde{\pi}_A}VV @. \\ p @>i>> X_{\psec{A}} @.
\end{CD} 
\end{equation}
One only needs to trace through the definitions to see that $\tilde{\pi}_Q \circ j$ maps $X_S$ into the union of the images of the toric varieties $X_{Q_i}$ via their $\linsys{A_i}$ maps. On the level of varieties, this gives us a simultaneous degeneration of $X_Q$ and $\mco_Q (1)$. Taking a global section of $\tilde{\pi}_Q^* (\mco (1))$ yields a degeneration of hypersurfaces. In this way, we have a universal space for performing the degenerations along the lines of the Mumford construction.

We would like to promote this setup to a morphism of stacks $\pi : \laf{A} \to \secon{A}$ so that $\mci (\linsysfull{A} )$ has an \'etale map to $\laf{A}$ and the quotient $[\mci (\linsysfull{A}) / (\C^*)^{d + 1}]$ is naturally an open substack of $\secon{A}$. This was done carefully in \cite{DKK} and we review the procedure here. 

Both the secondary and Lafforgue polytopes have vertices in a hyperplane parallel to $H_\Z = \{\sum c_\alpha e_\alpha : \sum c_\alpha = 0\} \subset \Z^A$. We consider both polytopes to live in $H \subset \R^A$ and write $i_H : H_\Z \to \Z^A$ for the inclusion. As with the case of $\du{A}$, there is an exact sequence
\begin{equation} \label{eq:fundex} \begin{CD} 0 @>>> L_A @>{\delta_A}>> \Z^A @>{\beta_A}>> M @>>> K_A @>>> 0 \end{CD} . \end{equation}
The hyperplanes supporting $\ptlaf{A}$ can be partitioned into horizontal and vertical hyperplane sections $ \du{\ptlaf{A}} = \du{\ptlaf{A}}^h \cup \du{\ptlaf{A}}^v$ and the vertical hyperplanes are all scalar multiples of $(\beta_A \circ i_H)^\vee (\du{A})$. To define a stacky fan, one must choose generators for the one cones. As opposed to taking primitives for the one cones in $\du{\ptlaf{A}}^v$, we take the generators in the image $(\beta_A \circ i_H)^\vee (\du{A})$, while for the horizontal hyperplanes in $\du{\ptlaf{A}}^h$, we choose the primitives of the hyperplanes in $\du{\ptlaf{A}}^h$. This defines the stacky fan which gives Lafforgue stack $\laf{A}$ associated to $A$.

To give the definition of the secondary stack, we rely on a universal colimit construction for toric stacks. It is not hard to show that, if $\tilde{\mcx}_{\psec{A}}$ is the stack given by $\psec{A}$, then there is a map $g: \laf{A} \to \tilde{\mcx}_{\psec{A}}$. The colimit stack $\secon{A}$ of $g$ comes equipped with a map $\pi : \laf{A} \to \secon{A}$. Both $\secon{A}$ and $ \pi$ can be described by the universal property that if $g$ can be factored into two flat, equivariant morphisms $h_1 \circ h_2$ where $h_1 : \laf{A} \to \mcx$ and $\mcx$ is a (good) toric stack, then there is a map $k: \secon{A} \to \mcx$ with $h_1 = k \circ \pi$. This property makes $\secon{A}$ the best toric candidate for the moduli stack of hypersurfaces in $\mcx_Q$. 
\begin{thm}[\cite{DKK}] There is a hypersurface $\hyp{A} \subset \laf{A}$ for which the map $\pi: \hyp{A} \cap (\partial \laf{A})_J \to \secon{A}$ is flat for all horizontal boundary strata $J \subset \du{\ptlaf{A}}^h$. The stack $\secon{A}$ contains a dense open substack $V \approx [\mci (\linsysfull{A} ) / (\C^*)^{d + 1}]$ for which $\pi : \hyp{A} (V) \to V$ is equivalent to the quotient map of $[\mci (\linsysfull{A} ) / \C^*]$.
\end{thm}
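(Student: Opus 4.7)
The plan is to construct $\hyp{A}$ as the zero locus of a universal section, verify flatness on horizontal boundary strata by fiber-wise analysis via the fiber diagram defining $X_S$, and then identify the open substack $V$ using the universal property of the colimit $\secon{A}$.

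For the construction, I would exploit the Minkowski summand $\Delta^A$ of $\ptlaf{A}$. The morphism $\tilde{\pi}_Q : X_{\ptlaf{A}} \to \p^{|A|-1}$ lifts to stacks and pulls back the tautological $\mco (1)$ to a line bundle on $\laf{A}$ equipped with an equivariant basis of global sections $\{s_\alpha : \alpha \in A\}$. Define $\hyp{A}$ to be the vanishing locus of the universal section $\tilde{s} = \sum_{\alpha \in A} x_\alpha s_\alpha$, with $x_\alpha$ the coordinates on $\C^A$. Pulling back along the fiber diagram for $X_S$ then shows that over the orbit $\orb{S}$ associated to a regular subdivision $S = \{(Q_i, A_i) : i \in I\}$, the hypersurface $\hyp{A}$ restricts to the union of the hypersurfaces $\mcy_{s_i}$ in the toric stacks $\mcx_{Q_i}$, recovering the expected Mumford-style degenerate family.

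Next, for flatness of $\pi : \hyp{A} \cap (\partial \laf{A})_J \to \secon{A}$ with $J$ horizontal: such a $J$ corresponds to a pair $(S, Q^\prime)$ where $Q^\prime$ is a proper face of some subdivided polytope $Q_i$, and the restriction of $\hyp{A}$ to the associated stratum in $\mcx_{Q_i}$ is $\mcy_{s_i} \cap \mcz_{J_i}$ for the appropriate boundary indices. The horizontality of $J$ ensures that we constrain only the combinatorial face data and not the coefficient parameters, so each $s_i$ remains full on $(Q_i, A_i)$; the preceding flatness proposition then yields pointwise flatness of each incidence component over the base. Combining this with Cohen-Macaulayness of the toric strata (miracle flatness) extends flatness globally over $\secon{A}$. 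The main obstacle I anticipate is compatibility with the non-primitive generators $(\beta_A \circ i_H)^\vee (\du{A})$ chosen on vertical hyperplanes in defining $\laf{A}$: one must verify that $\tilde{s}$ is suitably equivariant so that $\hyp{A}$ descends coherently along the colimit $g : \laf{A} \to \tilde{\mcx}_{\psec{A}} \to \secon{A}$.

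Finally, to identify $V$, I would take its image under the coarse map $\secon{A} \to X_{\psec{A}}$ to be the open orbit corresponding to the trivial subdivision $\{(Q, A)\}$. Over this locus, $\laf{A}$ restricts to an open substack equivalent to $[\mci (\linsysfull{A}) / \C^*]$, where the $\C^*$ scales the sections, and the universal section $\tilde{s}$ precisely cuts out the incidence relation. The universal property of $\secon{A}$ as a colimit then mandates a further quotient by the dense torus $T \cong (\C^*)^d$ of $\mcx_Q$, giving $V \approx [\mci (\linsysfull{A}) / (\C^*)^{d+1}]$; the map $\pi : \hyp{A} (V) \to V$ is the remaining $(\C^*)^d$-quotient map.
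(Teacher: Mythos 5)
This theorem is imported from \cite{DKK} without proof in the present paper, so there is no in-paper argument to compare against. Judged on its own terms, your skeleton (pull back the tautological bundle through $\tilde{\pi}_Q$, cut by a universal section, degenerate fiberwise over $\secon{A}$, and invoke the incidence-flatness proposition) is the natural one and matches what the paper's exposition suggests. But there are two genuine gaps.

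First, the construction of $\hyp{A}$ is incoherent as written. You define $\tilde{s}=\sum_{\alpha\in A} x_\alpha s_\alpha$ ``with $x_\alpha$ the coordinates on $\C^A$,'' but no such coordinates exist as functions on $\laf{A}$: $\laf{A}$ is a toric stack of dimension $|A|-1$, not a product with $\C^A$. The variation of the coefficients $c_\alpha$ is already recorded in the base $\secon{A}$ and in the map $\tilde{\pi}_Q$ itself (which, over the dense orbit, is the embedding $\mcx_Q\to\p^{|A|-1}$ associated to the chosen section up to torus). The hypersurface $\hyp{A}$ should therefore be the preimage of a \emph{fixed} hyperplane under $\tilde{\pi}_Q$ --- equivalently, the vanishing locus of a single global section of $\tilde{\pi}_Q^*\mco(1)$ --- not a $\C^A$-family of such. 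Your formula double-counts the moduli.

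Second, the flatness argument over the boundary strata is precisely where the content of the theorem lies, and the proposal does not close it. You invoke the proposition on flatness of $\mci_J(\linsysfull{A})\to\linsysfull{A}$, but that statement is internal to the non-degenerate open part; it does not by itself control what happens after the base has been compactified and the fibers have broken into unions of $\mcx_{Q_i}$'s. The ``miracle flatness'' step requires verifying that $\hyp{A}\cap(\partial\laf{A})_J$ is Cohen--Macaulay, that the relevant image stratum of $\secon{A}$ is regular, \emph{and} that the fibers are equidimensional; the last of these is a nontrivial combinatorial fact about the Lafforgue polytope and is not addressed. You also explicitly flag the equivariance/descent issue caused by the non-primitive generators on $\du{\ptlaf{A}}^v$ (which affects whether $\tilde{s}$ even gives a well-defined section of a line bundle on the stack $\laf{A}$ and whether $\pi$ factors through the colimit as claimed), but leave it as an ``anticipated obstacle'' rather than resolving it. As it stands, the argument identifies the correct ingredients but does not establish the theorem.
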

This theorem shows that $\pi : \hyp{A} \to \secon{A}$ is a reasonable compactification of the universal hypersurface over the moduli stack of toric hypersurfaces. It is this compactification that allows us to degenerate LG models and understand their components. 

It will be useful to identify the hypersurface of sections in $V \subset \secon{A}$ that do not transversely intersect the toric boundary of $\mcx_Q$. Recall that the principal $A$-determinant from \cite{GKZ} does this precisely and has the secondary polytope as its Newton polytope. Thus it can be written naturally as a section of $\mco_{\secon{A}} (1)$ and we write $\mce_A$ for its zero loci.

\subsection{The stack of Landau-Ginzburg models}

In this subsection we will review a toric compactification of the space of Landau-Ginzburg models arising from $A^\prime$-sharpened pencils. Near the fixed points of this compactification, we give a procedure for obtaining a semi-orthogonal decomposition of the directed Fukaya category of the model. 

The geometry of a fiber polytope has already proven useful in the case of a secondary polytope. As it turns out, this more general  notion works well in describing several moduli problems in the toric setting \cite{logstable}, \cite{KerrFP}. In particular, given two toric varieties, or stacks, $\mcx_{Q_1}$ and $\mcx_{Q_2}$ arising from marked polytopes, one may define a space of equivariant morphisms $\psi : \mcx_{Q_1} \to \mcx_{Q_2}$ for which $\psi^* (\mco_{Q_2} (1)) = \mco_{Q_1} (1)$ up to toric equivalence. This space has a reasonable compactification to a toric stack whose moment polytope is the fiber polytope $\Sigma (Q_2, Q_1)$. 

In the previous section, we examined the case where $Q_2$ was the simplex and $Q = Q_1$. This gave the secondary polytope $\psec{A} = \Sigma (Q_2, Q_1)$ as the moment polytope of the stack $\secon{A}$, which was regarded as a compactification of the moduli stack of toric hypersurfaces of $\mco_Q (1)$ in $\mcx_{Q}$. Prior to this construction, we considered LG models on $Q$ to be $A^\prime$-sharpened pencils $W$ which were given as equivariant maps $i_W : \C^* \to \linsysfull{A}$. Two $A^\prime$-sharpened pencils $W$, $\tilde{W}$ are equivalent if $W = \lambda \tilde{W}$ for some $\lambda \in (\C^*)^{d + 1}$. Thus, from the perspective of equivariant maps, up to toric equivalence a LG model is an equivariant map $\iota_W : \C^* \to [ \linsysfull{A} / (\C^*)^{d + 1}]$.  By equivariant, we mean with respect to the torus embedding $\C^* \to L_A^\vee \otimes \C^*$ given by the cocharacter $\gamma := \delta^\vee_A (\gamma_{A^\prime})$ where $\delta_A$ is defined in equation \ref{eq:fundex} and $\gamma_{A^\prime} \in (\Z^A)^\vee$ in section \ref{sec:tslg}. Note that the codomain of $\iota_W$ is an open chart for the stack $\secon{A}$ implying that the collection of such maps is an open chart of equivariant maps from $\p^1$ to the toric stack $\secon{A}$ with respect to the character map $\gamma : L_A \to \Z$. 

This map $\gamma : L_A \to \Z$ induces a map on polytopes $\psec{A} \to [0, N]$ for some $N$ determined by $A^\prime$. The fiber polytope $\Sigma (\psec{A} , [0, N])$ is known as the monotone path polytope (an example of an iterated polytope, see \cite{BS2}) and is denoted $\Sigma_{\gamma} (\psec{A} )$. The associated fiber stack $\mlg{A}{A^\prime}$ defined in \cite{KerrFP} then serves as a compactification of the open set of $G_{\gamma_{A^\prime}} := \Z \langle \gamma \rangle \otimes \C^*$ orbits contained in the dense subset of $\secon{A}$. Its coarse space is simply the toric variety associated to $\Sigma_\gamma (\psec{A})$.  We codify these notions in the following proposition.

\begin{prop} The quotient stack $\qshpen{A}{A^\prime} = [\shpen{A}{A^\prime} / (\C^*)^{d + 1}]$ of $A^\prime$-sharpened pencils forms an open dense subset of the proper toric stack $\mlg{A}{A^\prime}$. The fixed points of $\mlg{A}{A^\prime}$ are in one to one correspondence with parametric simplex paths of $\gamma : \psec{A} \to [0, N]$ and will be called maximal degenerations of $\mathbf{w}$. 
\end{prop}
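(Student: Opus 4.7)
The plan is to unpack the identification of $A^\prime$-sharpened pencils with equivariant maps $\p^1 \to \secon{A}$ and then invoke the fiber stack construction from \cite{KerrFP} in the special case where the target of the projection is the segment $[0,N]$. First I would translate the orbit description given just above the proposition: an $A^\prime$-sharpened pencil $W$ is, modulo the toric action of $(\C^*)^{d+1}$, the same data as an equivariant morphism $\iota_W : \C^* \to [\linsysfull{A}/(\C^*)^{d+1}]$ whose cocharacter is $\gamma = \delta_A^\vee (\gamma_{A^\prime})$. The chart $[\linsysfull{A}/(\C^*)^{d+1}]$ is an open substack of $\secon{A}$ by the theorem attributed to \cite{DKK} in the previous subsection, so $\qshpen{A}{A^\prime}$ embeds naturally in the stack of $\C^*$-equivariant morphisms $\p^1 \to \secon{A}$ with cocharacter $\gamma$, and the embedding is open because a generic sharpened orbit meets only the open chart.

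Next I would invoke \cite{KerrFP} to identify the proper toric compactification of this equivariant Hom stack with the toric stack $\mlg{A}{A^\prime}$ whose moment polytope is the fiber polytope $\Sigma(\psec{A}, [0,N]) = \Sigma_\gamma(\psec{A})$. Openness of $\qshpen{A}{A^\prime} \hookrightarrow \mlg{A}{A^\prime}$ then follows from the construction, density is immediate from irreducibility of $\mlg{A}{A^\prime}$ together with nonemptiness of $\qshpen{A}{A^\prime}$ (any very full section yields a sharpened pencil), and properness is built into the fiber stack construction, since the toric stack associated to an honest polytope is automatically complete.

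For the second assertion I would use the standard fact that fixed points of a toric stack are in bijection with vertices of its moment polytope. By the Billera-Sturmfels theory of iterated fiber polytopes \cite{BS2}, the vertices of the monotone path polytope $\Sigma_\gamma(\psec{A})$ correspond to coherent $\gamma$-monotone edge paths on $\psec{A}$, which is exactly what the proposition calls parametric simplex paths. Tracing the bijection through the compactification matches each fixed point with an equivariant $\p^1$ that degenerates to a chain of $\C^*$-orbits joining torus-fixed points of $\secon{A}$, and such a chain is precisely the combinatorial datum of a maximal degeneration of $\mathbf{w}$.

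The main obstacle is verifying the openness and boundary comparison in the first paragraph. One must check that the sharpening conditions (namely that $s_\infty$ be supported on $A^\prime$ while $s_1$ remains very full) correspond \emph{precisely} to an open locus in the equivariant Hom stack, and that the induced limit data at $0,\infty \in \p^1$ land in the intended boundary strata of $\secon{A}$. Concretely this means unwinding the cocharacter $\gamma_{A^\prime} \in (\Z^A)^\vee$ and comparing it with the support conditions defining $\linsysveryfull{A}$ and $\linsysfull{A}$; everything else in the proof is formal once this compatibility is established.
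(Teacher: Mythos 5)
Your proposal is correct and takes essentially the same route the paper does: the paper offers no separate proof, but rather states that the proposition ``codifies'' the preceding construction, which invokes the fiber stack of \cite{KerrFP} for the compactification and the Billera--Sturmfels correspondence from \cite{BS1,BS2} between vertices of the monotone path polytope $\Sigma_\gamma(\psec{A})$ and parametric simplex paths. Your unpacking of the orbit--equivariant-map identification and the standard fact that fixed points of a proper toric stack are indexed by vertices of the moment polytope is exactly the intended reading, and your flagged ``main obstacle'' (matching the sharpening support conditions with the open Hom locus) is a reasonable point of care that the paper likewise defers to the cited constructions.
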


Recall from \cite{BS1} that a parametric simplex path of a linear function on a polytope is an edge path which increases relative to the linear function. One consequence of the above construction is that, over any point $\psi$ in $\mlg{A}{A^\prime}$, there is a chain $\langle \psi_1, \ldots, \psi_t \rangle$ of projective lines which has a flat family of degenerated toric varieties (or stacks) lying over it. In the dense orbit, there is one such line, and the toric variety is irreducible, so we obtain a LG model. As we approach the toric boundary, we bubble $\psi$ into a stable map $\{\psi_i\}$ on several components $\cup_1^t \p^1$, and simultaneously degenerate the fibers of the LG model. In a maximally degenerated LG model, we have a chain $\langle C_1, \ldots, C_k \rangle$ of maps to one dimensional orbits of $\secon{A}$. Such strata correspond to the edges of the secondary polytope $\psec{A}$ which in turn correspond to circuit modifications or bistellar flips of the triangulations at the vertices. In \cite{DKK}, components of the fibers over each stable component were examined and found to reproduce well known relations in the mapping class group. They were also conjectured to represent homological mirrors to birational maps of the minimal model on $\mcx_Q^{mir}$.

\subsection{Semi-orthogonal decompositions}

Our next goal is to stratify our space of Landau-Ginzburg models so that for every strata, we obtain a semi-orthogonal decomposition of the Fukaya-Seidel category of the associated model. The decomposition we obtain will bear a direct relationship to the monotone paths corresponding to the the maximal degenerations. To do this, we start by recalling the notion of a radar screen which will yield a class of distinguished basis of paths for the LG model \cite{SeidelFPL}. The definition given here differs from that in \cite{DKK}, but gives the generalizes it and has the advantage of being defined for a generic LG model. Before we start the definition, it is worth keeping in mind that radar screens are auxiliary concepts depending only on configurations of points in $\C^*$ and do not depend on any of the toric stack definitions given earlier. In fact, one can consider their definition to be a logarithmic variant of the more conventional procedure which chooses a distinguished basis of paths to be those with constant imaginary value in the positive real direction \cite{HV}.

Let $E_r = (\C^*)^r / \mathfrak{G}_r$ be the parameter space of $r$ unmarked points in $\C^*$ and $P = \{z_1, \ldots, z_r\} \in E_r$. We order the points so that $ |z_i | \geq  |z_{i + 1}|$ for $1 \leq i \leq r$ and choose a lift $\tilde{P} = \{w_1, \ldots, w_r\}$ such that $e^{w_i} = z_i$. Inductively define paths $p_i : [0,\infty) \to \C$ starting at $w_{i}$ as follows. For $i = 1$, we take the path $p_1 (t) = w_1 + t$. Assume $p_{i }$ has been defined, then we take $p_{i + 1} $ to be the concatenation $p_{i} \ast \ell_i \ast \ell_i^\prime$ where $\ell_i^\prime (t) = w_{i} + t \cdot \text{Im} (w_{i +1} - w_i )$ for $t \in [0, 1]$ and $\ell_i (t) = \text{Re} (w_i) + \text{Im} (w_{i + 1}) - t \cdot \text{Re} (w_i - w_{i + 1})$. While $p_i$ are a collection of overlapping paths, it is clear that for any $\varepsilon$, we can perturb $p_i$ to $\tilde{p}_i$ so that $||p_i - \tilde{p}_i||_{L^2} < \varepsilon$ and $\{\tilde{p}_i: 1 \leq i \leq r \}$ forms a distinguished basis for $\tilde{P}$. Furthermore, if the values $|z_i|$ are distinct, the distinguished basis defined in this way is unique up to isotopy for $\varepsilon \ll 1$. 

\begin{defn} With the notation above, we say that $\mcb_{\tilde{P}} = \{e^{\tilde{p}_i} : 1 \leq i \leq r \}$ is a radar screen distinguished basis and take $\mcr_{P}$ to be the collection of all such bases. If $\tilde{P} \subset \{w \in \C : 0 \leq  Im (w) < 2\pi\}$ we write $\mcb_{P}$ and call any such basis a fundamental radar screen.
\end{defn}

Our main application of this definition is when $P$ is the collection of critical values of a LG model $\mathbf{w} \in \shpen{A}{A^\prime}$. Let $\Delta_{A, A^\prime}$ be the variety of all $A^\prime$-sharpened pencils that do not intersect the principal $A$ determinant $\mce_A$ transversely regarded as a subvariety of $\shpen{A}{A^\prime}, \qshpen{A}{A^\prime}$ or $\mlg{A}{A^\prime}$. We denote its complement in $\shpen{A}{A^\prime}$ and $\qshpen{A}{A^\prime}$ by $V_{A, A^\prime}$ and $\mcv_{A, A^\prime}$ respectively. Take $\tilde{E}_r = E_r / \C^*$ to be the quotient where $\C^*$ acts by multiplication. 

\begin{prop}  Suppose $r = |i_W^{-1} (\mce_A )|$ for some $W \in V_{A, A^\prime}$. The map $\mathbf{c} : V_{A, A^\prime} \to E_r$ given by $\mathbf{c} (W) = i_W^{-1} (\mce_A)$ can be completed to a commutative diagram
\begin{equation*}
\begin{CD} V_{A, A^\prime} @>{\mathbf{c}}>> E_r \\ 
@VVV @VVV \\
\mcv_{A, A^\prime} @>{\tilde{\mathbf{c}}}>> \tilde{E}_r
\end{CD}
\end{equation*}
\end{prop}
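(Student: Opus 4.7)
The plan is to verify that the critical-value map $\mathbf{c}$ is $(\C^*)^{d+1}$-invariant on $V_{A,A'}$; once this is established, commutativity of the square follows by postcomposing with the canonical projection $E_r \to \tilde{E}_r$.

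First, I would track the behaviour of the orbit map $i_W$ under the group action. Embed $(\C^*)^{d+1} \hookrightarrow (\C^*)^A$ via the characters $\chi_\alpha$ paired with the diagonal scaling, and denote the induced action on $\linsys{A} = \C^A$ by $L_t$. For $W \in V_{A,A'}$, the translated pencil is $t \cdot W = L_t(W)$; since $(\C^*)^A$ is abelian, the one-parameter subgroup $G_{A'}$ generated by $\gamma_{A'}$ commutes with $L_t$. Starting from a very full section $s_1 \in W$ produces $L_t(s_1) \in t \cdot W$ as a representative, whence
\[ i_{t \cdot W}(\lambda) = L_t(i_W(\lambda)), \qquad \text{so } i_{t \cdot W} = L_t \circ i_W. \]

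Next, I would invoke the torus-invariance of the divisor $\mce_A \subset \linsys{A}$. The principal $A$-determinant of \cite{GKZ} is weighted-homogeneous in the coordinates $c_\alpha$ and $T$-equivariant, so $L_t^{-1}(\mce_A) = \mce_A$ for every $t \in (\C^*)^{d+1}$. Combining this with the previous step,
\[ \mathbf{c}(t \cdot W) = i_{t \cdot W}^{-1}(\mce_A) = i_W^{-1}(L_t^{-1}(\mce_A)) = i_W^{-1}(\mce_A) = \mathbf{c}(W). \]
Hence $\mathbf{c}$ is $(\C^*)^{d+1}$-invariant and descends to a morphism $\mcv_{A,A'} \to E_r$; postcomposing with the canonical projection $E_r \to \tilde{E}_r = E_r/\C^*$ yields $\tilde{\mathbf{c}}$ and fills in the diagram.

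The only non-bookkeeping input is the torus-invariance of $\mce_A$, which one can either read off from its construction as a distinguished weight vector in the coordinate ring of $\linsys{A}$, or derive geometrically from the fact that its zero locus parametrizes sections of $\mco_Q(1)$ whose hypersurfaces fail to meet the toric boundary of $\mcx_Q$ transversely, a condition visibly preserved by the torus action on $\mcx_Q$ and by scaling of sections. The further projection to $\tilde{E}_r$ is technically redundant for the statement as given, but it is the natural target once $\tilde{\mathbf{c}}$ is extended across the compactification $\mlg{A}{A'}$: on a degenerate chain the LG model on each component is only defined up to a common rescaling of its target, so the critical values live canonically in $\tilde{E}_r$ rather than in $E_r$ itself.
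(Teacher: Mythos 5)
Your proof is correct and takes essentially the same route as the paper: both arguments hinge on the $(\C^*)^{d+1}$-quasi-homogeneity of the principal $A$-determinant $E_A$ (equivalently, the torus-invariance of $\mce_A$) combined with the equivariance $i_{t\cdot W} = L_t \circ i_W$ of the orbit maps, and the paper simply spells out the explicit scaling factor $\lambda^{v}\,\beta_A(p)(\eta)$ where you instead phrase it as $L_t^{-1}(\mce_A)=\mce_A$. One small caveat: your claim that $\mathbf{c}$ descends all the way to $E_r$ (so that $\tilde{E}_r$ is ``technically redundant'') presupposes a consistent normalization of the orbit parametrizations $i_W$, which is exactly the residual $\C^*$-ambiguity that the passage to $\tilde{E}_r$ is designed to absorb; this does not affect the validity of the filled-in square.
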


\begin{proof} This follows from the quasi-homogeneous property of the principal $A$ determinant with respect to the $(\C^*)^{d + 1}$ action on $\C^A$ (\cite{GKZ}). Indeed, we have that if $i_W, i_{\tilde{W}} \in \shpen{A}{A^\prime}$ are equivalent, then there exists $(\lambda, \eta) \in (\C^*)^{d + 1} = \C^* \times (\C^* \otimes N) $ such that $\lambda (1 \otimes \beta_A)^\vee (\eta ) i_W (z) = i_{\tilde{W}} (z)$ for all $z \in \C^*$. But then $E_A (i_{\tilde{W}} (z) ) = 0$ if and only if $\lambda^v \cdot \beta_A (p)(\eta ) \cdot E_A (i_W (z)) = 0$ where $v = (d + 1) \text{Vol} (Q)$, $p \in \psec{A}$ and $M$ is identified with $\Hom (\C^* \otimes N , \C^*)$.
\end{proof}

This proposition shows that a choice of radar screen for the critical values of an $A^\prime$-sharpened pencil can be consistently made on the quotient space $\mcv_{A, A^\prime}$. Now, the discriminant $\Delta : E_r \to \C$ given by $\Delta (z_1,\ldots, z_r) = \prod_{i < j} (z_i - z_j)^2$ is homogeneous and thus its zero locus is pulled back from $\tilde{E}_r$. The associated braid group $\tilde{B}_r = \pi_1 (\tilde{E}_r - \{\Delta = 0\} )$ is in fact a quotient of the subgroup of the braid group $B_{r + 1}$ which is pure on the strand at the origin. It is clear that the map $\tilde{\mathbf{c}}$ induces a representation of the fundamental group of $\mcv_{A, A^\prime}$ into $\tilde{B}_r$. More generally,  we have a representation of fundamental groupoids
\begin{equation*} \mathbf{r} : \Pi (\mcv_{A, A^\prime} ) \to \Pi (\tilde{E}_r - \Delta ). \end{equation*}

Define $\Delta_\R (z_1 , \ldots, z_r ) = \prod_{i < j} (|z_i | - |z_j|)^2$ to obtain a real stratification $\tilde{\mcs} = \{R_\rho : \rho \in \mcp\}$ of $\tilde{E}_r$. Here $\mcp$ denotes the set of partitions of $\{1, \ldots, r\}$ and $R_\rho = \{\{z_1, \ldots, z_r \} : |z_i| = |z_j| \text{ for } i \sim_\rho j ,\text{ and } |z_i| \leq |z_{i + 1}|\}$.

\begin{defn} The pullback stratification 
\begin{equation*} \mcs = \{R \text{ a component of } \tilde{\mathbf{c}}^{-1} (R_\rho ) : R_\rho \in \tilde{\mcs}\} \end{equation*}
on $\qshpen{A}{A^\prime}$ will be called the norm stratification.
\end{defn}

From the description of the toric boundary strata of $\mlg{A}{A^\prime}$, we may extend the norm stratification on $\mcv_{A, A^\prime}$ to the boundary. We will avoid the details of this extension, which are evident from the fact that the orbits degenerate to sequences of orbits, and refer to the resulting stratification on $\mlg{A}{A^\prime}$ as the extended norm stratification.

To use the definitions given above, we need a result that gives us a well defined category on which to work. If the sharpening set $A^\prime$ is chosen carefully, the associated LG model $\mathbf{w}$ has a sensible definition of a Fukaya-Seidel category $\fs{\mathbf{w}}$. For example, we have the following proposition.

\begin{prop}[\cite{DKK}] \label{prop:lef} Assume $A^\prime \subset A$ is contained in the interior of $Q$. If $W$ transversely intersects the principal $A$-determinant, then its LG model $\mathbf{w}$ is a Lefschetz pencil.
\end{prop}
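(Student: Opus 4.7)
The plan is to exploit the factorization of the principal $A$-determinant $E_A = \prod_{F \leq Q} \Delta_{A \cap F}^{m_F}$ from \cite{GKZ}, where the product runs over faces $F$ of $Q$.  Using the hypothesis that $A^\prime \subset \mathrm{int}(Q)$, I would first observe that $s_\infty$ vanishes identically on every proper toric stratum $\mcd_F$, so for any section $s = \lambda s_1 + \mu s_\infty \in W$ with $\lambda \neq 0$ the restriction $s|_F$ agrees with $\lambda \cdot s_1|_F$.  Consequently the value of $\Delta_{A \cap F}$ along $W$ is proportional to a power of $\lambda$ alone: it either never vanishes away from $[s_\infty]$, or it vanishes identically, in which case $W$ would be contained in $\{\Delta_{A \cap F} = 0\}$ and transversality would fail.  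Throwing out the latter degenerate situation reduces the problem to the open stratum cut out by the top factor $\Delta_A = \Delta_{A \cap Q}$.

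Next I would recall that the smooth locus of $\{\Delta_A = 0\}$ parametrises exactly those sections whose hypersurface in the open torus $(\C^*)^d$ carries a single ordinary double point and is otherwise smooth.  Transversality of $W$ to $\{E_A = 0\}$ at each intersection point $s_{t_i}$ then places $s_{t_i}$ in this smooth locus, so $\mathbf{w}^{-1}(t_i)$ has a unique node $p_i$.  Working on the open torus $\mcx_Q - \{s_\infty = 0\} \cong (\C^*)^d$ (again using $A^\prime \subset \mathrm{int}(Q)$), the standard local model of a nodal degeneration in a pencil expresses $\mathbf{w}$ near $p_i$ as $t_i + Q(x)$ with $Q$ a nondegenerate quadratic form, exhibiting $p_i$ as a Morse critical point of $\mathbf{w}$ with critical value $t_i$.

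To finish I would verify that the critical values $t_1, \ldots, t_r$ are pairwise distinct: two nodes in the same fiber would force $s_t$ into the deeper stratum of $\{\Delta_A = 0\}$ corresponding to sections with two singular points, which lies in the singular locus of $\{E_A = 0\}$ and so is avoided by the transversality assumption.  Together with the base-locus control inherited from very-fullness of $s_1$, these steps give precisely the data of a Lefschetz pencil.

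The main obstacle, as I see it, is the bookkeeping in the first step: one has to check that transversality of $W$ with the reducible variety $\{E_A = 0\}$ really forces individual transversality against the top factor $\{\Delta_A = 0\}$ at each node, without absorbing any contribution from the boundary factors.  The interior condition on $A^\prime$ is exactly what decouples these, and I would spell out carefully how it rules out pencils tangent to $\{\Delta_{A \cap F} = 0\}$ for proper $F$, so that the entire intersection $W \cap \{E_A = 0\}$ is accounted for by simple nodes in the torus.
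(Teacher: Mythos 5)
The paper does not actually prove this proposition --- it is cited as a result from \cite{DKK} --- so there is no in-text argument to compare yours against; what follows is an evaluation of your proposal on its own terms.

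The structure you lay out is the correct one and exploits the hypothesis in exactly the way one expects. The central observation --- that for every proper face $F < Q$ the interiority of $A'$ gives $A' \cap F = \emptyset$, so that $\Delta_{A \cap F}$ depends only on coefficients that scale by $\lambda$ along the pencil and hence is a fixed constant times a power of $\lambda$ --- is the right use of the hypothesis. Transversality then forces these proper-face factors to be nonvanishing on the affine part of the orbit. You should, however, attribute the control of the fibers of $\mathbf{w}$ along the toric boundary to this nonvanishing rather than to very-fullness of $s_1$; very-fullness alone keeps the hypersurface off the zero-dimensional toric strata but is strictly weaker than $\Delta_{A \cap F}(c) \neq 0$, and it is the latter that gives the tameness-at-infinity part of Seidel's Lefschetz condition. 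Two further small points: the identification $\mcx_Q - \{s_\infty = 0\} \cong (\C^*)^d$ holds when $A'$ is a single interior lattice point; for larger $A'$ the complement is a proper open subset of the torus, though the critical-point analysis is unaffected since the nodes lie where $s_\infty \neq 0$. And you implicitly use that the top factor $\Delta_A$ appears with multiplicity one in $E_A$ (which is true, but worth stating), since otherwise transverse intersection of the orbit with $\{E_A = 0\}$ at a zero of $\Delta_A$ would be impossible. With those clarifications the reduction to the smooth locus of $\{\Delta_A = 0\}$, the single-node characterization, and the Morse-plus-distinct-values argument are standard and complete the proof.
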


In particular, the singularities are isolated and Morse, and parallel transport is well defined along the base so that the usual notion of Fukaya-Seidel categories applies \cite{SeidelFPL}. Along with this, we have that the collection of distinguished bases is acted on by the full braid group $B_r$ which extends to an action on exceptional collections via mutations \cite{vcm}. 

From this proposition and the results in the above references, it is not difficult to obtain our main theorem for this section.

\begin{thm}\label{thm:so} The space $\mcv_{A, A^\prime}$ has a stratification $\mcs$ such that, for any component $R \in \mcs$, there exists a $\Z^r$ torsor $\mcc_R$ of semi-orthogonal decompositions of $\fs{\mathbf{w}}$ satisfying:
\begin{itemize} 
\item[(i)] If $R_1 < R_2 \in \mcs$ then there is a bijective map $\tau :\mcc_2 \to \mcc_1$ such that the semi-orthogonal decomposition $S \in \mcc_2$ refines that given by $\tau (S)$. 

\item[(ii)] If $\gamma: [0,1] \to \mcv_{A, A^\prime}$ is any morphism in $\Pi (\mcv_{A, A^\prime} )$ with $\gamma (0), \gamma (1) \in R_0$ giving exceptional collections, then $\mathbf{r} (\gamma )$ acts by mutation to map $\mcc_{R_0}$ to itself.

\item[(iii)] Assume $R \in \mcs$ and $\mathbf{w} \in \overline{R}$ is in the boundary of $\mlg{A}{A^\prime}$ and corresponds to a sequence $\langle \mathbf{w}_1 , \ldots, \mathbf{w}_t \rangle$. Then every semi-orthogonal decomposition in $\mcc_R$ refines the  decomposition $\langle \fs{\mathbf{w}_1} , \ldots , \fs{\mathbf{w}_t} \rangle$.
\end{itemize}
\end{thm}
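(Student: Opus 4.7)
The plan is to construct $\mcc_R$ from radar screen distinguished bases attached to lifts of the critical values of $\mathbf{w}$, and then verify (i)--(iii) by unpacking respectively the norm stratification, the braid action of $\mathbf{r}$, and the toric degeneration structure on $\mlg{A}{A^\prime}$. To begin, for $W \in R \subset \mcv_{A, A^\prime}$, let $P = \mathbf{c}(W) = \{z_1, \ldots, z_r\} \subset \C^*$ be its critical value set. By Proposition \ref{prop:lef}, $\mathbf{w}$ is a Lefschetz pencil, so after fixing a basepoint each radar screen basis $\mcb_{\tilde{P}}$ associated to a lift $\tilde{P} = \{w_1, \ldots, w_r\}$ with $e^{w_i} = z_i$ determines a distinguished basis of vanishing thimbles and hence a full exceptional collection generating $\fs{\mathbf{w}}$ \cite{SeidelFPL}. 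Define $\mcc_R$ as the set of semi-orthogonal decompositions obtained this way as $\tilde{P}$ varies; since the set of lifts of $P$ is a $\Z^r$ torsor via $(k_1,\ldots,k_r)\cdot \tilde{P} = \{w_i + 2\pi i k_i\}$, and distinct lifts yield non-isotopic radar screens in general, $\mcc_R$ inherits a $\Z^r$ torsor structure.

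For (i), note that if $R_1 < R_2$ in $\mcs$, then the partition $\rho_1$ labeling $R_1$ coarsens $\rho_2$: some strict inequalities $|z_i| > |z_j|$ on $R_2$ degenerate to equalities on $R_1$. On $R_2$ the inductive concatenation in the definition of $\mcb_{\tilde{P}}$ strictly orders the $r$ thimbles, while on $R_1$ the thimbles within each $\rho_1$-block share a modulus and only collectively determine a subcategory of $\fs{\mathbf{w}}$. Setting $\tau(S)$ to be the coarsening of $S \in \mcc_{R_2}$ that lumps thimbles of each $\rho_1$-block into one semi-orthogonal component gives a refinement map, and it is bijective since both $\mcc_{R_i}$ carry compatible $\Z^r$ torsor structures with the same underlying lift data. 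For (ii), the representation $\mathbf{r}$ sends $\gamma$ to an element of $\tilde{B}_r$, and the standard braid action on distinguished bases of a Lefschetz pencil \cite{vcm} lifts this to mutations of exceptional collections, giving the required action on $\mcc_{R_0}$.

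The main obstacle is (iii), which demands extending the construction across the toric boundary of $\mlg{A}{A^\prime}$. At a boundary point corresponding to a chain $\langle \mathbf{w}_1, \ldots, \mathbf{w}_t \rangle$ indexed by a parametric simplex path of $\gamma$, the universal hypersurface $\hyp{A}$ limits to a reducible divisor whose components are the hypersurfaces defined by the $\mathbf{w}_k$. Approaching this limit along a one-parameter family, the critical values $P$ separate into $t$ clusters whose moduli lie in increasingly disjoint annular bands of $\C^*$, the $k$-th cluster being the critical values of the smoothing of $\mathbf{w}_k$. The extended norm stratification places nearby $\mathbf{w}$ in a stratum $R$ whose partition refines this cluster decomposition. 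Because the inductive step in the radar screen construction proceeds in order of modulus, it never interleaves thimbles from different clusters, so any $\mcb_{\tilde{P}} \in \mcc_R$ is the concatenation of radar screens on each cluster; by the fiber degeneration results of \cite{DKK} these coincide with distinguished bases for the $\mathbf{w}_k$ individually, and the resulting semi-orthogonal decomposition refines $\langle \fs{\mathbf{w}_1}, \ldots, \fs{\mathbf{w}_t} \rangle$.

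The hardest point is controlling the $\Z^r$ torsor of lifts through the degeneration in (iii), since as $W$ approaches the boundary the moduli in far-apart clusters can be rescaled independently. I would address this by restricting to a slice of $\mlg{A}{A^\prime}$ transverse to the boundary stratum on which the cluster scales vary uniformly, choosing a compatible family of lifts there, and then transporting the resulting torsor structure to the boundary via the extension of the norm stratification. Combined with the braid-mutation compatibility from (ii), this yields the required coherent refinement and completes the proof.
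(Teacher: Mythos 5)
Your proposal follows the same route the paper intends and elaborates substantially on what is only a two-sentence sketch in \cite{DKK}-style: you build $\mcc_R$ from radar-screen distinguished bases indexed by logarithmic lifts $\tilde{P}$, derive (i) from the coarsening structure of the norm stratification, derive (ii) from the representation $\mathbf{r}$ into $\Pi(\tilde{E}_r - \Delta)$ together with Picard--Lefschetz mutation \cite{vcm}, and derive (iii) from the clustering of critical values into annular bands as one approaches a boundary stratum of $\mlg{A}{A'}$, matching the paper's appeal to ``the structure of the monotone path polytope.'' One point that you gloss over with ``distinct lifts yield non-isotopic radar screens in general'' and that the paper also leaves unverified: the diagonal shift $\tilde{P} \mapsto \tilde{P} + 2\pi i(1,\ldots,1)$ translates every $p_i$ by the same constant, so $e^{\tilde{p}_i}$ and hence $\mcb_{\tilde{P}}$ are unchanged, meaning the diagonal $\Z \subset \Z^r$ acts trivially on radar screens and the literal torsor of decompositions is $\Z^r/\Z$ rather than $\Z^r$; if the theorem's $\Z^r$ is to be taken at face value one needs $\mcc_R$ to carry a bit more data than the bare semi-orthogonal decomposition (e.g.\ a global grading choice), which neither you nor the paper pins down.
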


The first two statements follow from directly from the definition, while the third from the structure of the monotone path polytope. Note that this gives a direct relationship between the combinatorics of maximal degenerations, or parametric simplex paths, and decompositions of Fukaya-Seidel categories of Landau-Ginzburg models near such degenerations.

\section{$A_n$-categories}

In this section we consider the most basic possible case, the directed $A_n$-category. We give a detailed construction of the secondary, Lafforgue and monotone path stacks in this case. In particular, we describe the combinatorics of the monodromy maps around the discriminant and toric boundary. Symplectic geometry in these cases is completely absent, as the Fukaya categories are more of a combinatorial nature. Nevertheless, the structure and geometry of the decompositions and representations of this category is surprisingly rich and illustrates some of the techniques that are applied in higher dimensions.

%The stability conditions for this case were carefully considered by Kontsevich \ref{Kontstab}, which we will connect to our persperpective.

\subsection{The Lafforgue stack of an interval}

The derived $A_n$ category can be given by the Fukaya-Seidel category of a single polynomial 
\begin{equation*} \mathbf{w} (x) = c_{n + 1} x^{n + 1} + \cdots + c_1 x + c_0 . \end{equation*}
whose marked Newton polytope $(Q, A)$ is clearly $Q = [0, n + 1]$, $A = \{0, 1, \ldots, n + 1\}$. 

A first step to understanding this example is to characterize the stacks $\secon{A}$ and $\laf{A}$. We will derive $\laf{A}$ by obtaining its stacky fan. The secondary polytope of $A$ was examined in \cite{GKZ} and seen to be affinely equivalent to the representation theoretic polytope $P (2 \rho )$ which is the convex hull of the dominant weights $\{\omega\}$ of $A_n$ such that $\omega \leq 2\rho$ where $2\rho $ is the sum of the positive roots. We begin by reviewing their observations and establishing notation.

Take $\{e_0, \ldots, e_{n + 1} \}$ as a basis for $\Z^A$ and $\{\alpha_1, \ldots, \alpha_n\}$ a basis for $\Lambda_r \approx \Z^n$ and examine the fundamental exact sequence for $A$
\begin{equation*}
\begin{CD} 0 @>>> \Lambda_r @>{\delta_A}>> \Z^A @>{\beta_A}>> \Z^2 @>>> 0 \end{CD}
\end{equation*}
where $\beta_A (e_i) = (1, i)$ and $\delta_A (\alpha_i ) = -e_{i  - 1} + 2 e_i - e_{i + 1} $. Write $C_n$ for the $A_n$ Cartan matrix 
%
%\begin{equation}
%C_n = \left[ \begin{matrix} 
%2 & - 1 & 0 & \cdots & \cdots & 0 \\ 
%-1 & 2 & -1 & 0 & \cdots & \vdots \\ 
%0 & -1 & 2 & -1 & & \vdots \\
%\vdots & \ddots & \ddots & \ddots & & \vdots \\
%0 & 0 & \cdots & 0 & - 1 & 2 \end{matrix} \right]
%\end{equation}
%
and recall that this serves as a transformation matrix from the simple roots to the fundamental weights. We will view $\Lambda_r$ as the $A_n$ root lattice with simple roots $\{ \alpha_i\}$, fundamental weights $\{\lambda_1 , \ldots, \lambda_n\} \subset \Lambda := \Lambda_r^\vee$ and $\rho = \sum_{i = 1}^n \lambda_i = \frac{1}{2} \sum_{i = 1}^n i (n + 1 - i) \alpha_i$ the Weyl element.

\begin{prop}[\cite{GKZ}] \label{prop:secfan} The normal fan $\mcf$ of $\psec{A}$ in $\Lambda \otimes \R$ has $1$-cone generators $\{\alpha_1, \ldots, \alpha_n , -\lambda_1, \ldots, -\lambda_n\}$ in the weight lattice $\Lambda$ and cones
\begin{equation*} \sigma_{I, J} = \Span_{\R_{\geq 0}} \{ -\lambda_i , \alpha_j : i \in I, j \in J\} \end{equation*}
for any pair of disjoint sets $I, J \subset [n]$.
\end{prop}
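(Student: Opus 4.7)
The plan is to compute the inward normal cone at each vertex of $\psec{A}$ directly and show it matches $\sigma_{S,[n]\setminus S}$; the lower-dimensional cones $\sigma_{I,J}$ will then appear as intersections of maximal cones, dual to higher-dimensional faces of the cube $\psec{A}$. Since the secondary polytope of the interval is known to be combinatorially an $n$-cube (\cite{GKZ}), with vertices $v_S$ indexed by the subset $S\subset[n]$ of interior lattice points used in the triangulation, every vertex is simple with exactly $n$ outgoing edges. It therefore suffices to produce $n$ linearly independent functionals at each $v_S$, each vanishing on all but one edge vector and positive on the remaining one.

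The first step will be to write down these edge vectors. Using the GKZ formula $(v_S)_a = \sum_{T\ni a}\mathrm{vol}(T)$, a direct calculation shows that for $i\notin S$ the edge vector $E_i^S := v_{S\cup\{i\}}-v_S$ lies in $\delta_A(\Lambda_r)$ and equals $\delta_A(c^{(i,S)})$, where $c^{(i,S)} = \sum_l c^{(i,S)}_l \alpha_l$ is the tent function
\[
c^{(i,S)}_l = \begin{cases} (b-i)(l-a), & a\le l\le i,\\ (i-a)(b-l), & i\le l\le b,\\ 0, & \text{otherwise,}\end{cases}
\]
with $a,b$ the nearest neighbours of $i$ in $S\cup\{0,n+1\}$. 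This identification is easiest to see by recognising $\delta_A(c) = E_i^S$ as a discrete Poisson equation with sources at $\{a,i,b\}$ and Dirichlet data vanishing at $\{0,n+1\}$.

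Next I will pair the proposed generators against each edge. For $\alpha_j$ with $j\notin S$, the Cartan pairing yields $\alpha_j(E_i^S) = 2c^{(i,S)}_j - c^{(i,S)}_{j-1}-c^{(i,S)}_{j+1}$, the discrete Laplacian of the tent, which is supported on the kinks $\{a,i,b\}$. Since $a,b\in S\cup\{0,n+1\}$ and $j\in[n]\setminus S$, disjointness forces $j\neq a,b$, so the pairing vanishes unless $j=i$, in which case it equals $b-a>0$. For $-\lambda_i$ with $i\in S$, the identity $\lambda_i(\delta_A c)=c_i$, together with the observation that the nearest-neighbour definition of $a,b$ forbids any element of $S$ from lying strictly inside the support interval of $c^{(k,\cdot)}$ for $k\neq i$, shows that $-\lambda_i$ annihilates every edge at $v_S$ except $v_{S\setminus\{i\}}-v_S$, where it takes the value $(b-i)(i-a)>0$.

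Finally, linear independence of the $n$ vectors $\{-\lambda_i:i\in S\}\cup\{\alpha_j:j\notin S\}$ will follow by cofactor expansion along the $-\lambda_i$ columns, which are negatives of standard basis vectors; this reduces the determinant to the Cartan determinant of the $A$-type subdiagram $[n]\setminus S$, a nonzero product over its connected components. Since each normal cone is simplicial of dimension $n$, these $n$ vectors must be its extreme rays, confirming the maximal cone $\sigma_{S,[n]\setminus S}$. The lower cones $\sigma_{I,J}$ then emerge as the intersections $\bigcap_{I\subset S\subset[n]\setminus J}\sigma_{S,[n]\setminus S}$, dual to the subcube face of $\psec{A}$ with vertex set $\{v_S : I\subset S\subset [n]\setminus J\}$. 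The most delicate step will be the support analysis of the tent function, which hinges on the disjointness of the test index $j\in[n]\setminus S$ from the kinks $\{a,b\}\subset S\cup\{0,n+1\}$.
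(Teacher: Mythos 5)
The paper does not prove this proposition; it is cited from \cite{GKZ}, whose route identifies $\psec{A}$ with the representation-theoretic polytope $P(2\rho)$ and deduces the fan structure from that identification. Your proof instead verifies the fan directly from the GKZ vertex formula: you solve a discrete Dirichlet problem to express each edge vector $E^S_i = (i-b)e_a + (b-a)e_i + (a-i)e_b$ as $\delta_A(c^{(i,S)})$ for an explicit tent function with convention $c_0 = c_{n+1}=0$, then observe that pairing with $\alpha_j$ recovers the discrete Laplacian (supported on the kinks $\{a,i,b\}$) while pairing with $\lambda_i$ evaluates the tent at $i$, and close with a Cartan-determinant argument over the $A$-type subdiagram on $[n]\setminus S$ for independence. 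I checked the computation at the kinks and at interior points, and the support analysis ruling out $j\in[n]\setminus S$ from $\{a,b\}\subset S\cup\{0,n+1\}$ (and symmetrically for the $\lambda_i$ test) is sound; the identification of lower cones $\sigma_{I,J}$ with cube faces $\{v_S: I\subset S\subset[n]\setminus J\}$ is also correct. Your approach buys a self-contained elementary verification with no representation theory, at the cost of not exhibiting the affine isomorphism to $P(2\rho)$ that \cite{GKZ} use and that the paper alludes to in the surrounding discussion.
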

The vertices of $\psec{A}$ are easily seen to be in one to one correspondence with subsets $K = \{k_0 <  \ldots < k_m\} \subset \{1, \ldots, n\}$ representing the triangulations $T_K = \{([k_i , k_{i + 1}], \{k_i, k_{i + 1}\})\}$ and corresponding to the vertex $\varphi_K = \sum_{j = 0}^m (k_{i + 1} - k_{i - 1} ) e_{k_i}$ of $\psec{A}$. 

%Need to go back and fix the notation so that L_A = ker \beta_A$
In order to obtain the secondary stack, we need to write out the stacky fan for the Lafforgue stack and find the limit stack. The facets of the Lafforgue polytope were shown in \cite{DKK} to correspond to pointed coarse subdivisions of $A$. Since $\ptlaf{A}$ is an $(|A| - 1)$ dimensional polytope, we define the supporting hyperplane functions $\du{\ptlaf{A}}$ as elements in $(\Z^A)^\vee$ but restrict them to linear functions on $ \Gamma = \{\sum c_i e_i : \sum c_i = 0\}$. Letting $f_i = \delta_A (\alpha_i)$ and $f_0 =  e_{0} - e_{1}$, we take $\{f_0, f_1, \ldots, f_n\}$ as a basis for $\Gamma$ so that $\delta_A : \Lambda \to \Z^A$ lifts to $\tilde{\delta}_A : \Lambda \to \Gamma$. As we will show in a moment, there are $3n + 2$ facets of $\ptlaf{A}$, so the stacky fan is obtained by a fan in $\R^{3n + 2}$ along with a map $\xi_A : \Z^{3n + 2} \to \Gamma^\vee$ which gives the group $\tgroup{\ptlaf{A}} \simeq \ker (\xi_A ) \otimes \C^*$. We write $\{g_i : 1 \leq i \leq 3n + 2\}$ for the standard basis of $\Z^{3n + 2}$

The pointed coarse subdivisions $(S, B)$ of $A$ can be classified into three types. For each $1 \leq i \leq n$, there is a pointed subdivision $( (Q, A - \{i\}), A - \{i \} )$ whose supporting hyperplane function $g_i$ is given by $e^\vee_i \in (\Z^A)^\vee$ so that $\xi_A (g_i) = - f_{i - 1} + 2f_i - f_{i + 1}$. Also, for every $1 \leq i \leq n$, there are two pointed subdivisions corresponding to $Q = [0, i] \cup [i , n + 1 ]$ with pointing set $\{0, \ldots, i\}$ and $\{i , \ldots , {n + 1} \}$ respectively. The supporting primitives are easily seen to be $g_{2i} = \sum_{j = i + 1}^{m + 1} (j - i) e^\vee_j$ and $g_{3i} = \sum_{j = 0 }^i (i - j) e^\vee_j$ implying $\xi_A (g_{2i} ) = -f_i $ and $\xi_A (g_{3i} ) = -f_i - f_0$. Finally, there are two vertical pointed subdivisions $( (Q, A), \{ 0 \})$ and $( (Q, A ), \{{n + 1}\})$ corresponding to the one cones for $\mcx_Q$. The linear functions corresponding to these two subdivisions are $g_{3n + 1}, g_{3n + 2}$ which map to $\xi_A (g_{3n + i}) = (-1)^{i + 1} f_0^\vee$. We can write the map $\xi_A$ as the matrix
\begin{equation*}
\xi_A = \left[ \begin{array}{c| c |c| cc }
-1 \text{ } 0  \text{ }  \cdots  \text{ } 0 & 0  \text{ }  \cdots  \text{ }  0 & 1 \text{ } \cdots  \text{ } 1 & 1 & -1 \\
\hline 
 \text{ }  C_n  \text{ }  &   \text{ }  -I \text{ }   & \text{ } -I  \text{ }  & 0  & 0 
\end{array}
\right]
\end{equation*}
where $I$ is the $n \times n$ identity matrix. 

%FINISH DESCRIPTION OF 
The maximal cones of the Lafforgue fan $\mcf_{\ptlaf{A}}$ are indexed by pointed triangulations $\{(K, k) :K \subset [n], k \in K \}$. For example, if $k \ne 0, n + 1$, the cone in $\mcf_{\ptlaf{A}}$ associated to $(K, k)$ is
\begin{equation*} \sigma_{(K, k)} = \cone ( \{g_j : j \not\in K\} \cup \{g_{2j} : j \in K, j \leq k\} \cup \{ g_{3j} : j \in K , j \geq k \} ).\end{equation*}
If $k \in \{0, n + 1\}$, we add $g_{3n + 1}, g_{3n + 2}$ respectively to the generating set above. This in particular implies that  $\mcf_{\ptlaf{A}}$ is a simplicial fan. 

There is a map of fans $\pi_{\mcf} : \mcf_{\ptlaf{A}} \to \mcf_{\psec{A}}$ which can be promoted to a map of canonical toric stacks. Performing a calculation of the limit stacky fan defined in \cite{DKK} then gives the following proposition. 

\begin{prop} The Lafforgue stack $\laf{A}$ for $A = \{0, \ldots, n + 1 \}$ is smooth with covering $\{U_{(K,k) }\}$. The secondary stack $\secon{A}$ is smooth and is given by the stacky fan given in Proposition \ref{prop:secfan}.
\end{prop}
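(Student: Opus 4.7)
The proposition has two parts: smoothness of $\laf{A}$ with the given covering, and the identification of $\secon{A}$ with the stacky fan of Proposition~\ref{prop:secfan}. I would attack them in order, since the second uses the first via the colimit construction.

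For the smoothness of $\laf{A}$, I would work directly from the stacky fan $(\Gamma^\vee, \mcf_{\ptlaf{A}}, \xi_A)$ computed above. By the standard BCS construction, it suffices to show that each maximal cone $\sigma_{(K,k)}$ is simplicial of top dimension $|A|-1 = n+1$ and spanned by a subset of the standard basis $\{g_1,\ldots,g_{3n+2}\}$ of $\Z^{3n+2}$. I would do a case analysis on $(K,k)$ and count: when $k$ is interior to $[0,n+1]$ the three families $\{g_j : j \notin K\}$, $\{g_{2j} : j \in K, j \leq k\}$, and $\{g_{3j} : j \in K, j \geq k\}$ contribute exactly $n+1$ distinct basis vectors once the double-count at $j=k$ is accounted for, while in the boundary cases $k \in \{0,n+1\}$ the vertical rays $g_{3n+1},g_{3n+2}$ make up the deficit. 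Because each such cone is generated by part of a standard basis, the affine chart $U_{(K,k)}$ is isomorphic to $\C^{n+1} \times (\C^*)^{2n+1}$, and the quotient by $\tgroup{\ptlaf{A}} = \ker(\xi_A) \otimes \C^*$ is a smooth Deligne-Mumford chart of $\laf{A}$. Covering $\laf{A}$ by these charts completes the smoothness claim.

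For $\secon{A}$, I would invoke the universal colimit construction of \cite{DKK}. The combinatorial projection $\pi_\mcf : \mcf_{\ptlaf{A}} \to \mcf_{\psec{A}}$ underlies a flat equivariant morphism $g : \laf{A} \to \tilde{\mcx}_{\psec{A}}$, and the plan is to show that $g$ factors uniquely through the candidate stack determined by Proposition~\ref{prop:secfan}, so the latter is the colimit. Concretely, I would track the ray generators under the quotient $\Gamma^\vee \to \Lambda$ (dual to $\tilde{\delta}_A$, which kills $f_0^\vee$): the horizontal rays $g_i$ descend to the simple roots $\alpha_i$; the pair $g_{2i}, g_{3i}$ both descend to $-\lambda_i$, reflecting that from the secondary-polytope viewpoint they encode the same circuit modification at $i$; and the vertical rays $g_{3n+1}, g_{3n+2}$ span the $\C^*$ factor that is quotiented out in passing from sharpened pencils to toric hypersurfaces. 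Checking that each cone $\sigma_{(K,k)}$ surjects onto a cone $\sigma_{I,J}$ of Proposition~\ref{prop:secfan} (with $I,J \subset [n]$ read off from $K$ and the position of $k$ in $K$) then shows that the colimit stacky fan coincides with the one claimed, and smoothness of $\secon{A}$ follows because the $\sigma_{I,J}$ are simplicial and spanned by a subset of the $\Z$-basis $\{\alpha_i, -\lambda_j\}$ of $\Lambda$.

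The main obstacle I expect is in the second half, specifically the two-to-one collapse of the pair $(g_{2i}, g_{3i})$ onto the single ray $-\lambda_i$. I need to confirm that this identification is exactly what the universal property of the colimit predicts and does not introduce spurious torsion in the stacky group or coarsen the fan further. The bookkeeping around the $f_0^\vee$ direction and the vertical rays $g_{3n+1}, g_{3n+2}$ is where the subtleties concentrate, because these are precisely the directions along which the Lafforgue charts are glued to the secondary charts. Once those identifications are settled, the remaining translation between pointed triangulations $(K,k)$ and disjoint-pair data $(I,J)$ is a direct combinatorial correspondence.
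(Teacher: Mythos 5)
Your approach is the same as the paper's, which simply says to ``perform a calculation of the limit stacky fan,'' so the value of your write-up is in making the ray tracking explicit. The computation $g_i \mapsto \alpha_i$, $g_{2i}, g_{3i} \mapsto -\lambda_i$, $g_{3n+1}, g_{3n+2} \mapsto 0$ under the map dual to $\tilde{\delta}_A$ is correct, and the count of $n+1$ rays per maximal cone of $\mcf_{\ptlaf{A}}$ is right (though the phrase ``double-count at $j=k$'' is misleading: the index $j=k$ appears in both the $g_{2j}$ and $g_{3j}$ families but produces two \emph{distinct} rays, so no reduction occurs; the arithmetic is simply $(n - |K|) + (|K| + 1) = n + 1$).

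There is, however, a genuine error at the end: the claim that the generators $\{-\lambda_i, \alpha_j : i \in I, j \in J\}$ of a maximal cone $\sigma_{I,J}$ form a $\Z$-basis of $\Lambda$ is false. Already for $n = 2$ with $I = \{1\}$, $J = \{2\}$, the vectors $-\lambda_1$ and $\alpha_2 = -\lambda_1 + 2\lambda_2$ span an index-$2$ sublattice. The paper's own discussion right after this proposition confirms the presence of nontrivial stabilizers: for $n = 1$ one gets $\secon{A} = \p(1,2)$, and for general $n$ the chart $U_K$ is a $\mu_{r_1}\times\cdots\times\mu_{r_{m+1}}$-quotient. What you actually need, and what is true, is that each $\sigma_{I,J}$ is \emph{simplicial} (the vectors are an $\R$-basis), which follows because $\psec{A}$ is a cube, hence a simple polytope, hence has a simplicial normal fan. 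Simpliciality makes the canonical BCS stack Deligne--Mumford, and DM toric stacks are smooth as stacks; that is the sense of ``smooth'' intended. The distinction matters here because the integral span of the cone generators is precisely the stacky data the proposition is pinning down, and asserting a $\Z$-basis would erase it. The rest of your sketch --- counting rays for $\laf{A}$, invoking the colimit universal property, and flagging the two-to-one collapse $g_{2i}, g_{3i} \mapsto -\lambda_i$ and the $f_0^\vee$-direction as the delicate points --- is sound and aligned with what the paper leaves to the reader.
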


This implies that for $n > 1$, the secondary stack is given by taking the canonical stack of the normal fan $\mcf_{\psec{A}}$ while for $n = 1$ we have $\secon{A} = \p (1, 2)$. This reproduces the stacks studied in \cite{blume} which are quotients of the Losev-Manin stack. In all cases, there is a covering of $\secon{A}$ by $\{U_{K} \}_{K \subset [n]}$ where $U_K$ is the chart associated to the cone $\sigma_{K, [n] - K}$. Let us describe an open chart $U_K$ in the covering given above.  For $K = \{k_1 ,  \ldots , k_m \} \subset [n]$, assume $k_1 < \cdots < k_m$ and write $k_0 = 0$, $k_{m + 1} = n + 1$ and $r_i = k_i - k_{i -1}$ for $i = 1, \ldots, m + 1$. Write $\mu_r$ for the group of $r$-th roots of unity and let $\mu_r$ act on $\C^{r }$ via $\zeta (z_1, \ldots, z_{r }) = (\zeta z_1 , \zeta^2 z_2 , \ldots, \zeta^{r -1 } z_{r - 1} , z_r )$. We also take this as an action on the first $(r - 1)$ coordinates. Then, using the basis of the open cones in the weight lattice given in \ref{prop:secfan}, the open stack $U_{K}$ is easily seen to be the quotient stack 
\begin{equation*}
U_K \approx \left[ \C^{r_1} \times \cdots \times \C^{r_{m + 1} -1} / \mu_{r_1} \times \cdots \times \mu_{r_{k + 1}} \right].
\end{equation*}
This local description extends over the Lafforgue stack and the universal hypersurface. Indeed, writing $G_K$ for the group $\mu_{r_1} \times \mu_{r_{m + 1}}$, it is not hard to show that there is a polydisc neighborhood $V_K = [D_1 \times \cdots \times D_m / G_K ]$ near the origin of $U_K$, with
\begin{equation*} \pi_{\mch}^{-1} (V_K ) \approx \left[ \left( \cup_{j = 1}^{m + 1} \mu_{r_j} \right) \times D_1 \times \cdots \times D_m / G_K \right] . \end{equation*} 
Here $G_K$ acts in the obvious way on the set $\cup \mu_{r_j}$.

\subsection{Vanishing trees of maximal degenerations}

Having described the secondary stack and Lafforgue stack for $A_n$, we would like to consider our space of Landau-Ginzburg models which define the directed $A_n$-category. For this purpose, we choose $A^\prime = \{0\}$ and consider all $A^\prime$-sharpened pencils on $\mcx_Q = \p^1$. Since $A^\prime$ is not in the interior of $A$, we cannot apply Proposition \ref{prop:lef}. However, in this case we can state the following proposition whose proof is evident from the definitions in section \ref{sec:toric}.

\begin{prop} Let $A = \{0, \ldots, n + 1\}$ and $A^\prime = \{0\}$. The Landau-Ginzburg model of an $A^\prime$-sharpened pencil is a degree $(n + 1)$ polynomial $\mathbf{w}(z) = c_{n + 1} z^{n + 1} + \cdots + c_{0}$ such that $c_i \ne 0$ for $0 \leq i \leq n  + 1$. 
\end{prop}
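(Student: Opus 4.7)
The plan is to chase through the definitions of Section \ref{sec:tslg} in this one-dimensional setting, where essentially everything collapses to elementary polynomial algebra. First, with $M = \Z$ and $Q = [0,n+1]$, the toric stack $\mcx_Q$ is simply $\p^1$ and $\mco_Q(1) = \mco_{\p^1}(n+1)$. The equivariant basis $\{s_\alpha\}_{\alpha \in A}$ of $H^0(\p^1, \mco_{\p^1}(n+1))$ identifies in the dense affine chart (coordinate $z$) with the monomials $z^\alpha$ for $0 \le \alpha \le n+1$; the two toric fixed points $z = 0$ and $z = \infty$ are cut out respectively by $s_{n+1}$ and $s_0$. Consequently $\linsysveryfull{A}$ is exactly the set of polynomials $s = \sum_{\alpha} c_\alpha z^\alpha$ of degree at most $n+1$ with every $c_\alpha$ nonzero.

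Next, I unwind what an $A^\prime$-sharpened pencil looks like for $A^\prime = \{0\}$. The sharpening condition forces $s_\infty = c_0 s_0$, a nonzero constant in the affine chart. Taking any very full $s_1 = \sum_{\alpha} c_\alpha s_\alpha$ in $W$ and using the decomposition $s_1 = s'_0 + s_\infty$ with $s'_0 = \sum_{\alpha \ne 0} c_\alpha s_\alpha$ recalled after the definition of a sharpened pencil, one has $\{s_\infty = 0\} = \{z = \infty\}$, whose complement in $\p^1$ is $\C_z$. Reading off $\mathbf{w} = [s'_0 : s_\infty]$ in this chart gives
\begin{equation*}
\mathbf{w}(z) = \frac{s'_0(z)}{s_\infty(z)} = \frac{c_1}{c_0}z + \frac{c_2}{c_0}z^2 + \cdots + \frac{c_{n+1}}{c_0} z^{n+1},
\end{equation*}
a polynomial of degree exactly $n+1$ whose nonconstant coefficients are all nonzero. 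Replacing the basis $(s'_0, s_\infty)$ of $W$ by the equally valid $(s_1, s_\infty)$ shifts $\mathbf{w}$ by the additive constant $1$, so, up to this harmless freedom in basis, one recovers the general form $\mathbf{w}(z) = c_{n+1} z^{n+1} + \cdots + c_0$ with all coefficients nonzero, as in the statement. The converse direction, that every such polynomial is the Landau-Ginzburg model of a unique sharpened pencil, is immediate by reading off the coefficients as the $c_\alpha$ and taking the pencil spanned by $s_\infty = c_0$ and $s_1 = \sum c_\alpha z^\alpha$.

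The entire argument is simply a matter of unwinding the toric dictionary in dimension one, and I do not anticipate any genuine obstacle. The only subtlety to double-check is the convention matching the vertices of $Q = [0,n+1]$ with the torus-fixed points of $\p^1$ (so that $s_\infty$ vanishes at $z = \infty$ and $\mathbf{w}$ is a polynomial rather than a Laurent polynomial), together with the innocuous freedom in choosing the basis of $W$, which accounts for the constant term appearing in the displayed polynomial.
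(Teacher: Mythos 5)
Your proposal is correct and matches the paper's intent exactly: the paper states only that the claim ``is evident from the definitions in section \ref{sec:toric},'' and your argument is precisely the expected unwinding of the toric dictionary for $Q = [0,n+1]$, including the remark about the freedom in choosing a basis of $W$ (so that the constant term appears). There is no gap, and no genuinely different route is available or taken.
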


Note that the Fukaya-Seidel category is extremely sensitive to the choice of sharpening point (or set). For example, if $A = \{0, 1, 2\}$ and  we chose $A^\prime = \{1\}$ instead of $\{0\}$, we would obtain the homological mirror of $\p^1$  instead of the category of vector spaces (or the $A_1$-category). 

Recall that the moment polytope of the stack of $\mlg{A}{A^\prime}$  is the monotone path polytope of $\psec{A}$ relative to the function $\gamma_{A^\prime} = e_0^\vee$ and that maximal degenerated LG models correspond to monotone edge paths of $\psec{A}$. We describe this polytope in the following proposition.

\begin{prop}\label{prop:Anmon} The monotone path polytope $\Sigma_{\gamma_{A^\prime}} (\psec{A} )$ is combinatorially equivalent to an $(n - 1)$-dimensional cube.
\end{prop}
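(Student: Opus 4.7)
The plan is to enumerate the coherent monotone edge paths of $\psec{A}$ with respect to $\gamma_{A^\prime}$, show they are in bijection with subsets of $\{2,\ldots,n\}$, and then lift this bijection to the full face lattice of the $(n-1)$-cube.

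First I would record the combinatorial set-up. By Proposition \ref{prop:secfan} and the GKZ analysis recalled above, $\psec{A}$ is combinatorially the $n$-cube, its vertices being $\varphi_K$ for $K \subset \{1,\ldots,n\}$ and its edges the bistellar flips $\varphi_K \leftrightarrow \varphi_{K \triangle \{i\}}$. From the explicit formula $\varphi_K = \sum_{j} (k_{j+1}-k_{j-1}) e_{k_j}$ (with $k_0 = 0$ and $k_{m+1} = n+1$), and using $\gamma_{A^\prime} = \delta_A^\vee(e_0^\vee)$, one computes (up to an affine constant depending on the basepoint chosen in $L_A \otimes \R$)
\[
\gamma_{A^\prime}(\varphi_K) = \min K,
\]
with the convention $\min \emptyset := n+1$.

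Next I would classify the maximal monotone edge paths. A flip $\varphi_K \leftrightarrow \varphi_{K \triangle\{i\}}$ has $\gamma_{A^\prime}$ strictly increasing precisely when it removes the element $i = \min K$, so every monotone edge path from a $\gamma_{A^\prime}$-minimum vertex $\varphi_{K_0}$ (characterized by $1 \in K_0$) to the unique maximum $\varphi_\emptyset$ is forced: iteratively delete the current minimum until reaching $\emptyset$. Writing $K_0 = \{1\} \cup S$ for $S \subset \{2,\ldots,n\}$ then yields a bijection between maximal monotone edge paths and the vertex set $\{0,1\}^{n-1}$ of the $(n-1)$-cube, giving $2^{n-1}$ vertices of $\Sigma_{\gamma_{A^\prime}}(\psec{A})$.

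Finally, I would assemble the higher faces from coherent cellular strings. For fixed $i \in \{2,\ldots,n\}$ and $S \subset \{2,\ldots,n\} \setminus \{i\}$, the two paths indexed by $S$ and $S \cup \{i\}$ differ only by a local \emph{detour} at $i$, supported on a single square 2-face of $\psec{A}$ whose two $\gamma_{A^\prime}$-monotone edge paths realize precisely these two choices. Such a 2-face thus contributes an edge of $\Sigma_{\gamma_{A^\prime}}(\psec{A})$; more generally, a coherent cellular string with $k$ independent 2-face detours indexed by some $T \subset \{2,\ldots,n\}$ of size $k$ (with the remaining $n-1-k$ coordinates fixed) yields a $k$-face, reproducing the $(n-1)$-cube $f$-vector $\binom{n-1}{k}\,2^{n-1-k}$. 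The main obstacle is this last step: one must verify that detours at distinct elements of $\{2,\ldots,n\}$ are geometrically independent and that no spurious cellular strings arise from the non-genericity of $\gamma_{A^\prime}$ (many vertices share the same $\gamma$-value), so that the face lattice is genuinely that of a cube rather than some more intricate zonotope. I would handle this by perturbing to a generic $\tilde\gamma_{A^\prime} = \gamma_{A^\prime} + \varepsilon$ inside the interior of the relevant normal cone, enumerating $\tilde\gamma_{A^\prime}$-monotone paths and tracking which coalesce as $\varepsilon \to 0$; alternatively, one may appeal to the iterated fiber polytope formalism of \cite{BS2} applied to $\psec{A} = \Sigma(Q, \Delta^A)$ to realize $\Sigma_{\gamma_{A^\prime}}(\psec{A})$ directly as a Minkowski sum of $n-1$ segments.
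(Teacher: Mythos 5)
Your vertex enumeration (the first two steps) is essentially the paper's argument: compute $\gamma_{A'}(\varphi_K) = \min K$, observe that a $\gamma$-increasing flip must delete the current minimum, and conclude that each vertex $\varphi_K$ with $1 \in K$ (identified with $S \subset \{2,\ldots,n\}$) is the source of a unique monotone edge path. The paper phrases the uniqueness the same way (deleting anything but $k_{i+1}$ leaves $\gamma$ unchanged, inserting would decrease it).

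There are two places where you either leave a gap or diverge. First, a smaller point: you announce you will enumerate the \emph{coherent} monotone paths but in fact enumerate all monotone paths; your count gives $|V(\Sigma_{\gamma_{A'}}(\psec{A}))| \le 2^{n-1}$ but does not by itself certify that every one of your $2^{n-1}$ paths is actually a vertex of the fiber polytope. Second, and more substantially, the higher-face step is where you yourself flag the ``main obstacle.'' Your cellular-string/detour picture is the right heuristic, but the perturbation route would have to analyze carefully how $\Sigma_{\tilde\gamma}(\psec{A})$ degenerates as $\varepsilon\to 0$ ($\gamma_{A'}$ is highly non-generic), and the claim that $\Sigma_{\gamma_{A'}}(\psec{A})$ is literally a Minkowski sum of $n-1$ segments is a \emph{metric} zonotope statement that is stronger than the combinatorial assertion and not obviously supplied by \cite{BS2}. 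Neither alternative is carried out.

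The paper closes both gaps with one observation: the minimal fiber $\gamma_{A'}^{-1}(1)$ is a facet of $\psec{A}$, hence itself a combinatorial $(n-1)$-cube (it is the face of $\psec{A}$ corresponding to the coarse subdivision $\{[0,1],[1,n+1]\}$, which is $\psec{\{1,\ldots,n+1\}}$, an $(n-1)$-cube), and the Minkowski integral description of fiber polytopes sends faces of $\gamma_{A'}^{-1}(1)$ to faces of $\Sigma_{\gamma_{A'}}(\psec{A})$. Combined with your injection in the other direction (start-vertex map from paths to minimum vertices), this forces the vertex sets into bijection — in particular every one of your paths is coherent — and then forces the full face lattices to agree. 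Adding this Minkowski-integral step would complete your argument along the paper's lines.
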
 

\begin{proof} By the results of \cite{BS1}, the vertices of $\Sigma_{\gamma_{A^\prime}} (\psec{A} )$ correspond to parametric simplex paths on $\psec{A}$. We recall that the vertices of $\psec{A}$ are labeled by subsets $K = \{k_0, \ldots, k_m\} \subset \{1, \ldots, n\}$ with associated triangulation $T_K = \{[k_i, k_{i + 1}]\}$. The image of $\gamma_{A^\prime}$ is easily seen to be $[1, n + 1]$, where the set of vertices of $\psec{A}$ sent to $1$ are all subdivisions $\{1,k_1 , \cdots, k_m\}$. Omitting the element $1$, we identify these with subsets $J = \{k_1, \ldots, k_m\} \subset \{2, \ldots, n\}$. Now observe that to any such vertex, there is a unique parametric simplex path on $\psec{A}$ relative to $\gamma_{A^\prime}$ which has $J$ as its minimum. Indeed, if $P = (J = K_0, K_1, \ldots, K_r)$ is a sequence of vertices in a parametric simplex path, then $\{K_i, K_{i + 1}\}$ is an edge of $\psec{A}$ and $\gamma_{A^\prime} (K_i) < \gamma_{A^\prime} (K_{i + 1})$. It is not hard to see that $K_i = \{k_{i + 1} , \ldots, k_m\}$ gives such a path, establishing the existence claim. To see that it is unique, suppose $P^\prime = (K_0, \ldots, K_i, K_{i+1}^\prime, \cdots, K_{r})$ is any other parametric simplex path. Since $\{K_i, K_{i + 1}^\prime\}$ is an edge of $\psec{A}$, we have that $K_i^\prime$ is obtained from $K_i$ by inserting or deleting a element. As $\gamma_{A^\prime} (K_{i + 1}^\prime ) > \gamma_{A^\prime} (K_i)$, we cannot insert a point, and deleting any element besides $k_{i + 1}$ does not affect the value of $\gamma_{A^\prime}$. Therefor $K^\prime_{i + 1} = K_{i + 1}$ and the path is unique. 

By the Minkowski integral description of fiber polytopes, one easily observes that any face of $\gamma^{-1}_{A^\prime} (1)$ gives a face of $\Sigma_{\gamma_{A^\prime}} (\psec{A})$. Since the vertices are in bijection, this implies that the face lattices are equal and yields the proposition.
\end{proof}
From the proof of this proposition we obtain a combinatorial description of the sequence of circuits associated to maximal degenerations. Our next goal is to give a complete description of the semi-orthogonal decompositions connected to such sequences. We first must recall the degeneration and regeneration procedure from \cite{DKK}.  Consider a monotone path specified by $J = \{0, 1 = k_0, k_1, \ldots, k_m = n + 1\}$ and a function $\eta : A \to \Z$ which defines the triangulation given by $J$ (see \cite{BFS}  or \cite{GKZ}). Briefly recall that, for $a < b \in A$, if we denote $\eta^\prime_{a,b} = \eta(b ) - \eta (a) / (b - a)$ then this means that $\eta^\prime_{k_i, k_{i + 1}}$ is increasing relative to $i$ and that $\eta (a)$ lies above the under-graph of $\eta$ for $a \not\in J$. To simplify the treatment, we also assume that $\eta^\prime_{k_i, k_{i + 1}} \in \Z$. Fix any $\mathbf{c} = (c_{n + 1}, \ldots, c_0) \in \C^{n + 2}$ such that $c_i = 1$ for $i \in J$, and define the family of polynomials
\begin{equation*}
\psi (\mathbf{c},s, t) (z) = \left( \sum_{i=1}^{n +1} c_i s^{\eta (i)} z^i \right) + s^{\eta (0)} t 
\end{equation*}
which, for $s \ne 0$ give very full sections $\psi (\mathbf{c},s, t) \in \linsysveryfull{A}$. 
Notice that this gives an $s$ parameterized family of $A^\prime$-sharpened pencils $\psi (\mathbf{c}, s, \_ ) $. After quotienting with the appropriate group, we can think of $\psi$ as a function from $\C^*$ to $\mlg{A}{A^\prime}$, or as a function from $(\C^*)^2 $ to $\secon{A}$. We will shift between perspectives in what follows.

As was seen in the proof of Proposition \ref{prop:Anmon}, the sequence $\langle C_1, \ldots, C_m \rangle$ of circuit modifications  associated to $J$ are supported on  $C_i = \{0, k_{i - 1} , k_{i } \}$ and correspond to edges of $\psec{A}$ with vertices $T_{K_{i -1}}$ and $T_{K_i}$ where $T_i = \{0, k_i , k_{i + 1} , \cdots , k_m\}$. For any $1 \leq i \leq m$, we may reparameterize $\psi$ so as to obtain a regeneration of $C_i$. First recall that such a regeneration of $C_i$ is a map $\tilde{\psi}_i$ completing a diagram
\begin{equation} \label{diag:reg}
\begin{tikzcd} \C^* \arrow[hook]{r}\arrow{d}{\rho_i} & \C \times \C^* \arrow{d}{\tilde{\psi}_i} \\ 
\secon{C_i} \arrow[hook]{r} & \secon{A}
\end{tikzcd}
\end{equation}
where $\rho_i$ is \'etale onto the complement of $\{0, \infty\} \subset \secon{C_i}$, the top arrow is the inclusion into $\{0\} \times \C^*$ and $\tilde{\psi}_i$ is a finite map. Explicitly, this is given by reparameterizing $\tilde{\psi}_i (s, t) = \psi \left( \mathbf{c}, s, s^{\eta (k_{i + 1} ) -\eta (0) - k_{i + 1} \eta^\prime_{k_{i + 1}, k_i}} t \right)$ and completing to $s = 0$. Indeed, letting $z = s^{\eta^\prime_{k_i, k_{i + 1} }} u$ we have that $\lim_{s \to 0} \tilde{\psi}_i (s, t) $ converges to the circuit pencil which can be written in the $u$ coordinate as
\begin{equation} \label{eq:wi} \mathbf{w}_i (u) = u^{k_{i + 1}} + u^{k_i} + t . \end{equation}
The functions $\mathbf{w}_i$ are precisely those yielding the subcategories in the semi-orthogonal decomposition from Theorem \ref{thm:so}. One can think of the reparameterization as giving an asymptotic prescription for the $i$-th bubble in the stable map limit of $\psi$ as $s$ tends to $0$. Moreover, it is important to remember that the fibers of $\pi :\laf{A} \to \secon{A}$ over $\tilde{\psi}$ themselves degenerate into reducible chains of projective lines $\cup_{j = i + 1}^m \p^1$ as in Figure \ref{fig:degeneration}.

\begin{figure}[t]
\begin{picture}(0,0)%
\includegraphics{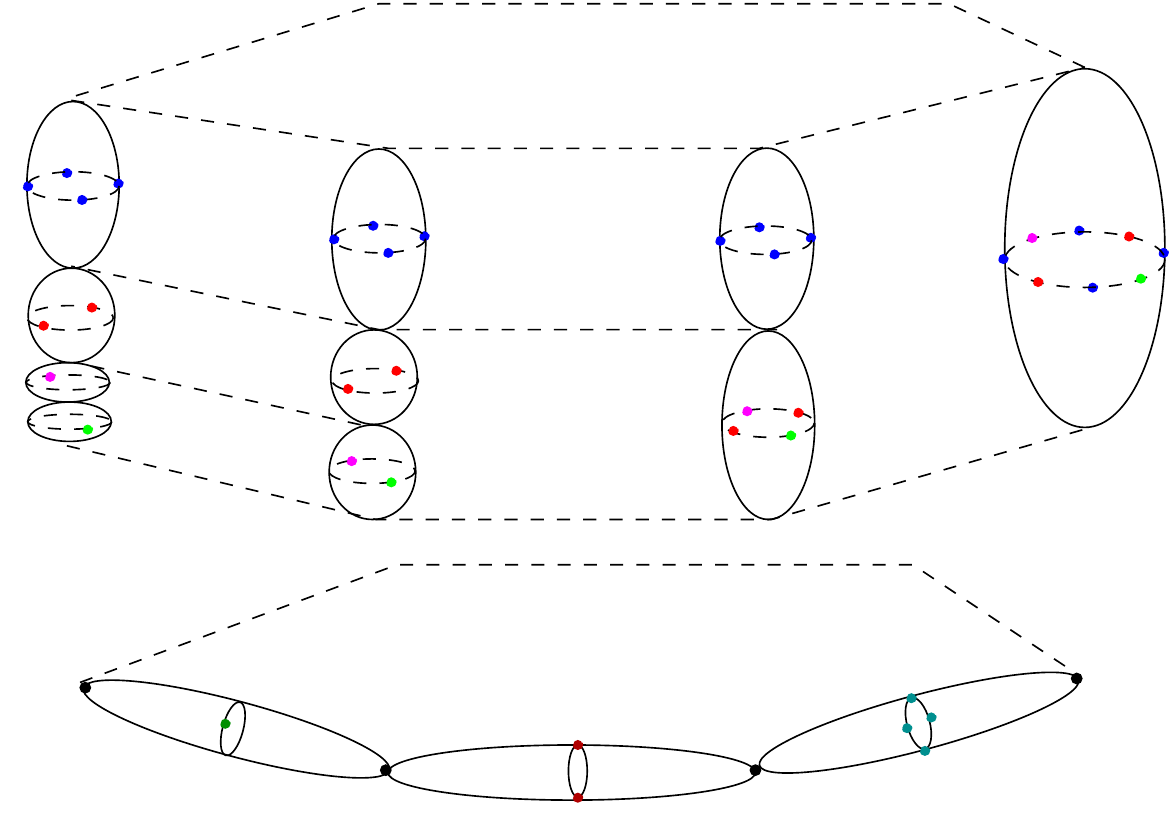}%
\end{picture}%
\setlength{\unitlength}{4144sp}%
\begingroup\makeatletter\ifx\SetFigFont\undefined%
\gdef\SetFigFont#1#2#3#4#5{%
  \reset@font\fontsize{#1}{#2pt}%
  \fontfamily{#3}\fontseries{#4}\fontshape{#5}%
  \selectfont}%
\fi\endgroup%
\begin{picture}(5349,3820)(301,-3599)
\put(1165,-3350){\makebox(0,0)[lb]{\smash{{\SetFigFont{6}{7.2}{\rmdefault}{\mddefault}{\updefault}{\color[rgb]{0,0,0}$C_1$}%
}}}}
\put(2821,-3557){\makebox(0,0)[lb]{\smash{{\SetFigFont{6}{7.2}{\rmdefault}{\mddefault}{\updefault}{\color[rgb]{0,0,0}$C_2$}%
}}}}
\put(4475,-3350){\makebox(0,0)[lb]{\smash{{\SetFigFont{6}{7.2}{\rmdefault}{\mddefault}{\updefault}{\color[rgb]{0,0,0}$C_3$}%
}}}}
\put(2737,-2811){\makebox(0,0)[lb]{\smash{{\SetFigFont{6}{7.2}{\rmdefault}{\mddefault}{\updefault}{\color[rgb]{0,0,0}$\secon{A}$}%
}}}}
\put(2737,-123){\makebox(0,0)[lb]{\smash{{\SetFigFont{6}{7.2}{\rmdefault}{\mddefault}{\updefault}{\color[rgb]{0,0,0}$\laf{A}$}%
}}}}
\put(316,-2750){\makebox(0,0)[lb]{\smash{{\SetFigFont{6}{7.2}{\rmdefault}{\mddefault}{\updefault}{\color[rgb]{0,0,0}$\{0,1,2,4,8\}$}%
}}}}
\put(5145,-2736){\makebox(0,0)[lb]{\smash{{\SetFigFont{6}{7.2}{\rmdefault}{\mddefault}{\updefault}{\color[rgb]{0,0,0}$\{0,8\}$}%
}}}}
\put(3669,-3473){\makebox(0,0)[lb]{\smash{{\SetFigFont{6}{7.2}{\rmdefault}{\mddefault}{\updefault}{\color[rgb]{0,0,0}$\{0,4,8\}$}%
}}}}
\put(1758,-3473){\makebox(0,0)[lb]{\smash{{\SetFigFont{6}{7.2}{\rmdefault}{\mddefault}{\updefault}{\color[rgb]{0,0,0}$\{0,2,4,8\}$}%
}}}}
\end{picture}%

\caption{\label{fig:degeneration} The pullback of $\secon{A}$ and $\laf{A}$ for the monotone path $J = \{0, 1, 2, 4, 8\}$}
\end{figure}

Letting $s$ and $t$ tend to $0$ in $\tilde{\psi}_{i + 1} (s, t)$, one approaches the fixed point of $\secon{A}$ associated to the triangulation $T_{K_i}$ in the monotone path. The $u$ roots $\fib{A}{q} := \pi_{\mch}^{-1} (q)$ of $ q = \tilde{\psi}_{i + 1} (s, t ) \in \secon{A}$ converge to the degenerated hyperplane section. As described at the end of the previous subsection, this hypersurface degeneration results in a partition of the fiber $\fib{A}{q}$ into $(m - i)$ subsets $F_{i,i + 1} (q),  \ldots, F_{i, m} (q)$. For every $j > i $, the set $F_{i, j}(q)$ converges to $(k_j  - k_{j - 1})$ roots of unity of the $j$-th component of the degeneration of the fiber, while the component $F_{i, i + 1} (q)$ converges to the $k_i$ roots of unity. Thus all of the $F_{i,j}(q)$ are cyclically ordered sets. For the example illustrated in Figure \ref{fig:degeneration}, the sets $F_{0,1} (q) , F_{0,2} (q), F_{0,3}(q), F_{0,4}(q)$ are colored green, purple, red and blue, respectively.

For $j > i + 2$ the subsets $F_{i,j} (q)$ experience no monodromy as $q$ varies near the $C_i$ component, regardless of the path. Thus for $j > i + 2 $ there is a collection of unique monodromy isomorphisms $\tau_{i,j} : F_{i, j} (q) \to F_{i + 1,j} (q^\prime )$ where $q$ and $q^\prime$ approach $0$ and $\infty$ respectively of $C_i$. To identify the remaining sets, we must choose a radar screen $\mcb$ for $\tilde{\psi}_{i + 1} (s, t)$ to obtain the isomorphism
\begin{equation*} \tau_{i, i + 2} :F_{i , i + 1} (q) \cup F_{i, i + 2}(q) \to F_{i + 1, i + 2} (q^\prime) . \end{equation*}
To define $\tau_{i, i + 2}$, we take the path first path $p_j \in \mcb$ which does not end on a point in $C_i$, degenerate, and reparameterize the component $\gamma_i$ of $p_j$ so that it is a path from $0$ to $\infty$ in $C_i$  (if $i = 1$, take the last path and concatenate to extend it to $0$). Then $\tau_{i, i + 2}$ is defined as the monodromy along $\gamma_i : [0, 1] \to C_i$.

Note that in the $1$ dimensional case, vanishing thimbles of a polynomial $\mathbf{w}$ are simply paths in $\C$ with endpoints on a fiber $\mathbf{w}^{-1} (q)$. Labeling them according to which path in the radar screen they are defined by, we obtain an edge labeled tree which we refer to as the vanishing tree of $\mathbf{w}$ with respect to $\mcb$. If we omit the grading, this tree encodes all of the data necessary to compute the algebra of the morphisms between exceptional objects in $\fs{\mathbf{w}}$. We would like to give a concrete combinatorial formulation of this vanishing tree. 

Towards this end, suppose $S_1, S_2$ and $S_3$ are finite sets such that $S_1$ and $S_3$ have a cyclic order and $|S_3|= |S_1 | + |S_2|$. We call a bijection $\sigma : S_1 \cup S_2 \to S_3$ a cyclic $|S_2|$ insertion if $\sigma|_{S_1}$ preserves the cyclic order. Now, assume $S_2$ comes equipped with a total order $<$ and label $S_2 = \{s_1, \cdots, s_{|S_2|}\}$. Extend this to a partial order on $S_1 \cup S_2$ by taking $s < s^\prime$ if $s \in S_1$ and $s^\prime \in S_2$. If $\sigma$ is a cyclic $|S_2|$ insertion and $s_l \in S_2$, define $m_\sigma (s_k ) = s^\prime \in S_1 \cup S_2$ to be the unique element less than $s_k$ such that every element $s \in S_3$ in the cyclic interval between $\sigma (m_\sigma (s_k))$ and $ \sigma (s_k ) $ satisfies $s_k < \sigma^{-1} (s)$. We define the incidence graph of this function
\begin{equation*}
I_{\sigma , <} = \{ (\sigma (s) , \sigma (m_{\sigma} (s))) : s \in S_2 \} \subset S_3 \times S_3
\end{equation*}
Now, let $\tilde{P} (s)$ be a choice of logarithms of the critical values of $\psi (\mathbf{c}, s, \_ )$.

\begin{thm} \label{thm:an}  For $s \ll 1$ and a radar screen distinguished basis $\{p_1, \ldots, p_n\} = \mcb_{\tilde{P} (s)}$, the map $\tau_{i, i  + 2}$ is a cyclic $(k_{i + 1} - k_i)$ insertion. There is a unique total order $<$ on $F_{i, i + 2}$ such that the vanishing graph associated to $\{p_{k_i}, \ldots, p_{k_{i + 1}}\}$ is $ I_{\tau_{i, i + 2}, <}$. Furthermore, every cyclic $(k_{i + 1} - k_i)$ insertion $\sigma$ and total order $<$ arises as a monodromy map for some radar screen and regeneration of $\mathbf{w}_i$.
\end{thm}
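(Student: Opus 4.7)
The plan is to reduce the theorem to a local study of the degeneration $\psi(\mathbf{c}, s, \_)$ at the $i$-th bubble, and then to an explicit combinatorial computation of the associated radar screen.

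First, I would establish the scale structure of the critical values. The reparameterization $\tilde{\psi}_{i+1}(s, t)$ in diagram \ref{diag:reg} forces the critical values of $\psi$ to cluster into annuli at logarithmically separated scales determined by the slopes $\eta^\prime_{k_{j-1}, k_j}$. After rescaling, each outer cluster $F_{i,j}(s)$ (for $j > i+1$) converges to the nontrivial critical values of $\mathbf{w}_j(u) = u^{k_{j+1}} + u^{k_j} + t$, while the innermost cluster $F_{i, i+1}(s)$ converges to the $k_i$-th roots of unity, absorbing all previous bubbles. Each limiting cluster inherits a cyclic order from the rotational symmetry of the underlying cyclotomic equation.

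Second, the logarithmic scale separation lets me analyze the radar screen cluster-by-cluster. For $s$ small, all paths targeted at an inner cluster share a common outer portion, and within each cluster the radar screen is determined up to isotopy by the relative imaginary parts of the logarithms of the critical values. The subfamily $\{p_{k_i}, \ldots, p_{k_{i+1}}\}$ consists precisely of the paths terminating in the annular cluster $F_{i, i+2}(s)$, and the order in which they are enumerated by the radar screen construction defines a natural total order on $F_{i, i+2}$ via these heights; this will be the claimed total order $<$.

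The third and main technical step is to compute $\tau_{i, i+2}$ by tracking parallel transport of $F_{i, i+1}(q) \cup F_{i, i+2}(q)$ along the component $\gamma_i$ of the chosen radar screen path inside $C_i$. The inner cluster $F_{i, i+1}$ is essentially the pullback of the fiber of $\mathbf{w}_i$ over $u = 0$ and so transports rigidly as a $\mathbb{Z}/k_i$-orbit; its image in $F_{i+1, i+2}$ therefore preserves cyclic order, which is exactly the condition $\sigma|_{S_1}$ preserves cyclic order required of a cyclic insertion. The outer cluster $F_{i, i+2}$ branches along $\gamma_i$ and its endpoints interleave with the image of $F_{i, i+1}$, giving the $(k_{i+1} - k_i)$-insertion property of $\sigma = \tau_{i, i+2}$. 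Combining with the second step, the vanishing graph associated to $\{p_{k_i}, \ldots, p_{k_{i+1}}\}$ reads off precisely the pairs $(\sigma(s), \sigma(m_\sigma(s)))$ of $I_{\sigma, <}$, because each concatenation $p_l \ast \ell_l \ast \ell_l^\prime$ in the radar screen construction lifts under $\tau_{i, i+2}$ to an adjacency between consecutive endpoints in $F_{i+1, i+2}$.

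For the realizability claim, any prescribed pair $(\sigma, <)$ is realized by perturbing the free coefficients $c_{k_i + 1}, \ldots, c_{k_{i+1} - 1}$ so that the critical values of $\mathbf{w}_i$ take any configuration compatible with the rotational symmetry, and then adjusting the radii to prescribe the total order. The main obstacle is the monodromy calculation in the third step: one must orient the concatenations $\ell_l$ and $\ell_l^\prime$ through the bubble $C_i$ with care and match them with the abstract combinatorial insertion, as a sign or orientation error at the reparameterization of $\gamma_i$ would entirely change the resulting identification of $F_{i, i+1} \cup F_{i, i+2}$ with $F_{i+1, i+2}$.
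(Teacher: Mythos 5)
Your approach is essentially the paper's: reduce to the local picture at the $i$-th bubble, exploit scale separation of the critical values under the $\eta$-reparameterization, track monodromy of the clustered fiber points, and realize arbitrary $(\sigma, <)$ by varying the intermediate coefficients and the radar screen. Two points of imprecision are worth flagging. First, you conflate critical values with fiber points. The sets $F_{i,j}(q)$ are subsets of the fiber $\mathbf{w}^{-1}(q)$, and the paper shows they converge to scaled \emph{roots of unity} on the respective component circles of the contour $S_r = f^{-1}(S^1_r)$ — not to ``the nontrivial critical values of $\mathbf{w}_j$'' as you write. The radar screen paths $\{p_{k_i},\ldots,p_{k_{i+1}}\}$ end on critical values (the $a = k_{i+1}-k_i$ scaled roots of unity plus the ramified value near $0$), whereas the vanishing thimbles they induce connect points of $F_{i,\bullet}$; keeping these two roles distinct is necessary to derive the incidence graph $I_{\tau_{i,i+2},<}$. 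Second, your realizability discussion has the parameters reversed: the coefficient perturbation $(c_{k_i+1},\ldots,c_{k_{i+1}-1})$ determines the \emph{norm ordering} of the critical values, hence the total order $<$, while the choice of logarithmic branches in the radar screen rotates the contour between consecutive pinchings and is what controls the cyclic insertion $\sigma$, not the other way around. The paper's contour picture (Figure~\ref{fig:cont}) is what makes the ``branching and interleaving'' you invoke precise: for $s \ll 1$ the $a$ circles of the degenerate level set pinch off one at a time at radii $|q_i|$, each pinch contributing a vanishing thimble from the pinched point to its clockwise cyclic neighbor on the central component. This single picture delivers, at once, cyclic-order preservation of $\tau_{i,i+2}|_{F_{i,i+1}}$, the total order on $F_{i,i+2}$, and the identification of the vanishing tree with $I_{\tau_{i,i+2},<}$; your sketch gestures at the conclusion but leaves exactly this mechanism unspecified, which is where the genuine content of the theorem lies.
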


The dictionary to use for the input structures of this theorem is as follows. The sets $F_{i , i + 1} (q), F_{i, i + 2}(q), F_{i + 1, i +2} (q^\prime)$ give $S_1, S_2, S_3$ respectively, the perturbation coefficients $\mathbf{c}$ gives a total order on $S_2$ and the radar screen gives $\sigma = \tau_{i, i + 2}$.

\begin{proof} We sketch a proof. Let $a = k_{i + 1} - k_i$, $b = k_i$ and recall that the LG-model $\tilde{\psi}_i (s, t)$ corresponding to $C_i$ converges to  $\mathbf{w}_i (u) = u^{a + b} + u^b + t$. As an $A^\prime$ sharpened pencil on $\C^*$, this is $[f(u):1] := [u^{a+b} + u^b : 1] = [-t : 1]$. We take a moment to understand the geometry of this elementary polynomial $f$. First observe that $\mathbf{w}_i$ has $a$ critical values at scaled roots of unity $d \zeta$ for $R = |b / (a + b)|^{1/a}$ and $\zeta \in \mu_a$, as well as a $(b-1)$ ramified critical value at $0$. Let $S^1_r$ be the radius $r$ circle and examine the contour $S_r := f^{-1} (S^1_r) \subset \C$ as we vary $r$. It is not hard to see that for $r > R$, $S_r$ is a circle which is an $(a + b)$-fold cover of $S^1_r$, while for $r < R$ it is a union of $a$ circles, $a$ of which cover $S^1_r$ once and the remaining circle covering it $b$ times. For $r = R$, $S_r$ is a circle with $a$ pinched pairs of points. This is illustrated in Figure \ref{fig:cont}.

\begin{figure}
\begin{picture}(0,0)%
\includegraphics{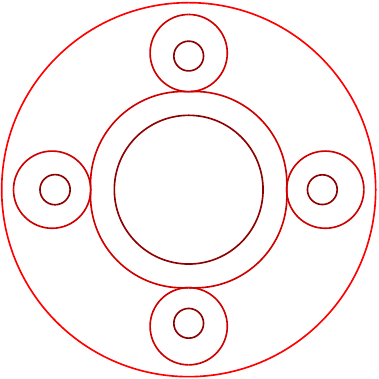}%
\end{picture}%
\setlength{\unitlength}{4144sp}%
\begingroup\makeatletter\ifx\SetFigFont\undefined%
\gdef\SetFigFont#1#2#3#4#5{%
  \reset@font\fontsize{#1}{#2pt}%
  \fontfamily{#3}\fontseries{#4}\fontshape{#5}%
  \selectfont}%
\fi\endgroup%
\begin{picture}(1726,1726)(758,-969)
\put(2001,-472){\makebox(0,0)[lb]{\smash{{\SetFigFont{5}{6.0}{\rmdefault}{\mddefault}{\updefault}{\color[rgb]{.82,0,0}$r=R$}%
}}}}
\put(997,409){\makebox(0,0)[lb]{\smash{{\SetFigFont{5}{6.0}{\rmdefault}{\mddefault}{\updefault}{\color[rgb]{1,0,0}$r>R$}%
}}}}
\put(1377, 56){\makebox(0,0)[lb]{\smash{{\SetFigFont{5}{6.0}{\rmdefault}{\mddefault}{\updefault}{\color[rgb]{.56,0,0}$r<R$}%
}}}}
\end{picture}%
\caption{\label{fig:cont} Contours for $|f (u)| = r$}
\end{figure}

Note that $\mathbf{w}_i$ is degenerate in the sense that it lies in the closure of the discriminant $\Delta_{A, A^\prime}$. Nevertheless, the sets $F_{i, i + 1} (q)$ and $F_{i, i + 2} (q)$ converge, up to a phase, to the $b$ roots of the inner circle of Figure \ref{fig:cont} and points contained in one of each of the $a$ outer circles, respectively. Were we to regenerate a straight line path from $q$ to $q^\prime$, it is not hard to see that the monodromy would then give a cyclic $b$-insertion $F_{i, i + 1} (q) \cup F_{i , i + 2} (q) \to F_{i + 1, i + 2} (q^\prime )$. 

To obtain the actual monodromy map $\tau_{i, i + 2}$, we need to define a radar screen $\mcb$ for the regeneration of $\mathbf{w}_i / \mu_{a}$ along $\tilde{\psi}_i (s, t)$. Recall from section \ref{sec:toric} that a radar screen is a distinguished basis of paths $ \mcb = \{p_1, \ldots, p_a, p_{a + 1}\}$ ending on the $(a + 1)$ critical values $\{q_1, \ldots, q_a, 0\}$ of $\tilde{\psi}_i (s, \_ )$, ordered so that $|q_i| > |q_{i + 1}|$. It is determined uniquely by the regeneration $\tilde{\psi}_i$ and a choice of logarithmic lifts $\{\tilde{q}_1, \ldots, \tilde{q}_a\}$ of the critical values.  We note that, to first order, only the $(c_{a + b - 1 }, \ldots, c_{b + 1} )$ projection of  the coefficient $\mathbf{c} = (c_{n + 1}, \ldots, c_0)$, matters in determining the norm ordering of the critical values for $\tilde{\psi}_i (s, \_ )$. It is easy to see that one may prescribe any ordering with a judicious choice of such coefficients.

The key point in the proof is that for $s \ll 1$, the Figure \ref{fig:cont} is only mildly modified, so that instead of pinching off $a$ circles at once when $r = R$, we pinch off circles one by one, each time $r = |q_i|$. Through the identification of $F_{i, i + 2}$ with the outer contours given above, we see that the ordering of critical values gives a total ordering $<$ of $F_{i, i + 2}$. Between each pinch, the choice of logarithmic branch $\tilde{q}_i = \log (q_i)$ for the radar screen $\mcb$ has the effect of rotating the circle $S_r$ as one performs monodromy along $p_{i + 1}$. Note that this monodromy preserves the cyclic ordering of the $b$ points that survive the pinching, so that the total monodromy of $F_{i, i + 1} (q)$ along $\delta_{a + 1}^{-1}$ also preserves the cyclic order. These observations show that $\tau_{i, i + 2}$ is a cyclic $a$ insertion and $F_{i, i + 2}(q)$ is totally ordered.

To establish the claim about the vanishing tree, simply observe that, for each pinch, we add a vanishing cycle connecting two points of the fiber $f^{-1} (p_j (z))$ on the central component of $S_r$. One of the points will always be the point pinched off, and the other will be one of its cyclic neighbors. The fact that this is always the clockwise neighbor corresponds to our choice of counter-clockwise orientation for the radar screen distinguished basis. It is left as an exercise to see that the resulting collection of pairs of points is $I_{\tau_{i, i + 2}, <}$.
\end{proof}

Applying this to the example $J = \{0,1,2,4,8\}$ illustrated in Figure \ref{fig:degeneration}  with fundamental radar screen gives the vanishing tree in Figure \ref{fig:vtr}. One starts with the unique $(1,1)$ cyclic insertion yielding the green vanishing thimble, proceeds to a $(2, 2)$ insertion which gives two red vanishing thimbles, and completes the tree with a $(4, 4)$ insertion producing the $4$ blue vanishing thimbles. In general, the proof above shows that the insertions can be chosen arbitrarily. However, if we choose the fundamental radar screen distinguished basis for an exponential sequence $J = \{0\} \cup \{2^i : 0\leq i \leq m\}$, it can be shown that each insertion is a perfect shuffle \cite{perfectshuffle}. This reflects a general phenomenon that the fundamental radar screen gives insertions $\tau_{i, i + 1}$ that maximally separate the points in $F_{i, i+ 1} (q)$ and $F_{i, i + 2} (q)$.

\begin{figure}[h]
\begin{picture}(0,0)%
\includegraphics{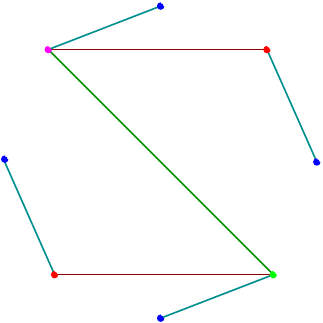}%
\end{picture}%
\setlength{\unitlength}{4144sp}%
\begingroup\makeatletter\ifx\SetFigFont\undefined%
\gdef\SetFigFont#1#2#3#4#5{%
  \reset@font\fontsize{#1}{#2pt}%
  \fontfamily{#3}\fontseries{#4}\fontshape{#5}%
  \selectfont}%
\fi\endgroup%
\begin{picture}(1469,1469)(1359,-1313)
\end{picture}%
\caption{\label{fig:vtr} A vanishing tree for $J = \{0, 1, 2, 4, 8\}$.}
\end{figure}

\subsection{Interpretations of $A_n$ degenerations}

We conclude this section with some observations and corollaries of Theorem \ref{thm:an}. First note that, were we to find the actual exceptional collection associated to the vanishing tree, we would need to include gradings on each of the edges. For ease of exposition, we neglect these gradings, commenting only that choosing different logarithmic branches in a radar screen that yield equivalent vanishing trees will generally alter the graded version. 

Now, recall that Gabriel's theorem classifies quivers of finite type as directed Dynkin diagrams \cite{gabriel}. The set of quivers $\mcq$ whose the underlying graph is $A_n$, can be identified with the power set of $ \{ 2, \cdots, n\}$. To obtain a precise correspondence, order the vertices of the quiver by $\{v_1, \ldots, v_n\}$ and edges $\{e_2, \ldots, e_n\}$ where $e_i = \{v_{i - 1}, v_i\}$. We say $\mathfrak{o} (e_i ) = \pm 1$ if $e_i$ is directed towards $i$ or  $(i -1 )$ respectively. Then map $J = \{k_1, \ldots, k_{m - 1}\} \subset \{2, \ldots, n\}$ to the unique quiver $\Gamma_J$ which satisfies $\mathfrak{o} (e_{i}) = -1$ if and only if $i \in J$.

Given such a quiver, we propose that there is a natural $T$-structure on the derived category $\mcd$ of right modules over the path algebra $\mcp (\Gamma_J )$ (recall that, up to equivalence, $\mcd$ is independent of $J$). First, write $P_i$ for the projective module of all paths with target $v_i$. Let $p_J : \{1, \cdots, n \} \to \Z$ be the function $p_J (j) = \frac{j }{2} - \frac{1}{2} \sum_{i = 1}^j \mathfrak{o} (e_{i + 1})$. We view $p_J$ as a perversity function and define $(\mcd^{\geq 0}_J , \mcd^{\leq 0}_J )$ as subcategories for which a bounded chain complex $C^*$ of right $\mcp (\Gamma_J)$ modules is in $\mcd^{\geq 0}_J$ if $H^k (\Hom (P_i , C^*)) = 0$ for all $k < p_J (i)$ and likewise for $\mcd^{\leq 0}_J$.

Let us now construct a complete exceptional collection $\mathbf{E}_J = \langle E_1, \ldots, E_n \rangle$ in the heart $\mcd^{\geq 0}_J \cap \mcd^{\leq 0}_J$ for a given $J$.  Define $E_i = P_i[p_J (i)]$ if $\mathfrak{o} (e_{i + 1}) = 1$ and  
\begin{equation*} E_i = (0 \leftarrow P_i[p_J(i)] \stackrel{e_{i + 1}}{\longleftarrow} P_{i + 1}[p_J(i + 1)] \leftarrow 0) \end{equation*}
otherwise.

Given an exceptional collection $\mathbf{E} = \langle E_1, \ldots, E_n \rangle $, write $R_{\mathbf{E}} = \ext^* (\oplus_1^n E_i , \oplus_1^n E_i )$ for its Yoneda algebra. A simple computation gives the following proposition.

\begin{prop} The collection $\mathbf{E}_J$ is a complete, strong exceptional collection for $\mcd$. The heart of $(\mcd^{\geq 0}_J, \mcd^{\leq 0}_J)$ is equivalent to the category of finitely generated right modules over $R_{\mathbf{E}_J}$. \end{prop}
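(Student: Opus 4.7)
The plan is to recognize $T := \bigoplus_{i=1}^n E_i$ as a tilting object and apply a Bondal--Kapranov/Rickard-style equivalence. Since $\mcp(\Gamma_J)$ is the path algebra of an acyclic quiver of type $A_n$, it is hereditary of global dimension $1$; in particular $\ext^{\geq 2}$ vanishes between projectives, and $\Hom(P_j, P_i)$ is spanned by the directed paths in $\Gamma_J$ relating $v_i$ and $v_j$. All Ext computations we need thus reduce to path combinatorics together with at most a single connecting differential coming from the two-term $E_i$.

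The verification proceeds in four steps. First, confirm that each $E_i$ lies in the heart: compute $\Hom^\bullet(P_j, E_i)$ using the defining presentation of $E_i$ (a single shifted projective, or a two-term complex) and check that the cohomology is concentrated in degree $p_J(j)$. The perversity function is designed precisely so that the shift discrepancies $p_J(i) - p_J(j)$ and $p_J(i+1) - p_J(j)$ match the degrees in which $\Hom(P_j, P_i)$ and $\Hom(P_j, P_{i+1})$ can be nonzero. Second, verify strong exceptionality: for $i < j$ split into the four cases determined by whether $i+1, j+1 \in J$, compute $\ext^k(E_i, E_j)$ from the hereditary resolution, and observe that the higher Ext's cancel through the combination of hereditarity and the matching of shifts. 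Third, verify the triangular vanishing $\ext^*(E_j, E_i) = 0$ for $j > i$: here the directed structure of $\Gamma_J$ forces the relevant $\Hom(P_a, P_b)$'s to vanish in the needed ranges. Fourth, establish completeness via the defining triangles $P_{i+1}[p_J(i+1)] \to E_i \to P_i[p_J(i)]$ (or the trivial identification in the single-projective case), inductively recovering every $P_k$ inside the thick subcategory $\langle T \rangle$.

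Once $T$ is shown to be a tilting object with $\text{End}(T) \cong R_{\mathbf{E}_J}$ (the latter a direct path-counting exercise), Rickard's theorem yields an equivalence $\mcd \simeq D^b(\text{mod-}R_{\mathbf{E}_J})$ sending $T$ to the regular module. Under this equivalence the standard $t$-structure transports to a bounded $t$-structure on $\mcd$ whose heart contains $T$ as a tilting generator; since a bounded $t$-structure on the derived category of a finite-dimensional algebra is determined by such a generator in its heart, and since $(\mcd^{\geq 0}_J, \mcd^{\leq 0}_J)$ was shown in step one to enjoy the same property, the two $t$-structures must coincide and their common heart is equivalent to $\text{mod-}R_{\mathbf{E}_J}$. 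The main obstacle I expect lies in the second step, where the four sub-cases determined by the $J$-parity of $i, j, i+1, j+1$ must all yield consistent cancellation; keeping the perversity shifts balanced across these cases is the combinatorial heart of the proof. A secondary subtlety is the endpoint $i = n$, where $e_{n+1}$ is absent and the formulas for $p_J(n)$ and $E_n$ need an explicit convention in order to give integer values and to produce an object of the heart.
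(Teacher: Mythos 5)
The paper offers no proof at all — it merely asserts ``a simple computation gives the following proposition'' — so there is nothing in the text against which to compare your mechanics. Your tilting-theoretic blueprint is the natural strategy, and the first four verification steps are indeed what a direct computation would entail for a hereditary path algebra, modulo two small slips: projectives have no higher $\ext$ whatsoever, not merely $\ext^{\geq 2}$; and the defining triangle should read $P_{i+1}[p_J(i+1)] \to P_i[p_J(i)] \to E_i \to P_{i+1}[p_J(i+1)+1]$, with $E_i$ in third position rather than second.

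The genuine gap is in your concluding paragraph. The assertion that ``a bounded $t$-structure on the derived category of a finite-dimensional algebra is determined by such a generator in its heart'' is not a standard lemma, and in any case your step one only establishes that each $E_i$ lies in the heart of $(\mcd^{\geq 0}_J, \mcd^{\leq 0}_J)$ — it does not show that $T = \bigoplus E_i$ is a \emph{projective} generator for that heart, which is the hypothesis such a uniqueness argument would actually require. The cleaner and more honest way to close the loop (and what ``a simple computation'' presumably means) is to compare the aisles directly. The transported standard $t$-structure has $\mcd^{\geq 0}$ cut out by $H^k(\Hom(E_i, -)) = 0$ for $k < 0$ and all $i$, while the paper's $\mcd^{\geq 0}_J$ is cut out by $H^k(\Hom(P_i[p_J(i)], -)) = 0$ for $k < 0$ and all $i$. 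Since each $E_i$ is either $P_i[p_J(i)]$ or the cone of $P_{i+1}[p_J(i+1)] \to P_i[p_J(i)]$, the long exact sequence in $\Hom$ translates between the two families of conditions, and one must check — using the recursion $p_J(j) - p_J(j-1) = \frac{1}{2}\bigl(1 - \mathfrak{o}(e_{j+1})\bigr)$ together with the directedness of $\Gamma_J$ — that the shift discrepancies cancel so that the two aisles coincide. This is exactly where your ``four sub-cases'' live and where the combinatorial content of the proposition resides; it has to be carried out rather than deferred to a uniqueness principle. Your flag on the $i = n$ endpoint, where $e_{n+1}$ is undefined and the paper states no convention for either $p_J(n)$ or $E_n$, is correct and worth making explicit.
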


To connect this collection to maximal degenerations of Landau-Ginzburg models, we approach the fixed point  $\psi_J \in \mlg{A}{A^\prime}$ associated to $J$ via a degeneration path $\psi (\mathbf{c}, s, \_)$. Using Theorem \ref{thm:an}, we can describe the vanishing tree of a radar screen $\mcb$ through a sequence of totally ordered, cyclic $(k_{i + 1} - k_i)$-insertions. Partitioning $\{1, \cdots , n + 1\}$ to the ordered sets $S_i = \{k_{i - 1} + 1, \ldots, k_{i}\}$ for $1 \leq i \leq m$, we identify 
\begin{equation*}
F_{i, i + 1}  = \{1, k_1, k_1 - 1, \ldots, k_0 + 1, k_2,  \ldots, k_1 + 1, \ldots \ldots, k_{i - 1} , \ldots , k_{i - 2} + 1 \}
\end{equation*}
where the cyclic order is as written. We define the cyclic insertions  $\sigma_i : F_{i, i + 1}  \cup S_{i + 1} \to F_{i + 1, i + 2}$ as the inclusion. Let $R(J)$ be the radar screen which yields this data and write the resulting exceptional collection as $\mathbf{E}_{R (J)}$.

\begin{prop} For every $J \subset \{2, \cdots , n\}$,  $R_{\mathbf{E}_{R (J)}} \approx R_{\mathbf{E}_J}$.
\end{prop}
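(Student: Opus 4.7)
The plan is to identify both $R_{\mathbf{E}_J}$ and $R_{\mathbf{E}_{R(J)}}$ with the path algebra $\mcp(\Gamma_J)$ (up to the opposite convention). Since $\Gamma_J$ is Dynkin of type $A_n$, $\mcp(\Gamma_J)$ is finite dimensional with a basis indexed by directed paths, and the only relations that arise are automatic: a length-two composition vanishes precisely at a vertex where the orientation reverses.

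For $R_{\mathbf{E}_J}$, I would start directly from the defining formulas for $E_i$—either a shifted projective $P_i[p_J(i)]$, or a two-term complex involving $P_i[p_J(i)]$ and $P_{i+1}[p_J(i+1)]$, depending on $\mathfrak{o}(e_{i+1})$. Since $\mathbf{E}_J$ is strong and sits in the heart of $(\mcd^{\geq 0}_J, \mcd^{\leq 0}_J)$, all $\Hom$'s are concentrated in degree zero. Computing with projective resolutions over $\mcp(\Gamma_J)$ shows $\dim\Hom(E_i, E_{i+1}) = 1$, with the generator coming from the edge $e_{i+1}$, and a case check on $\mathfrak{o}(e_i)$ vs.\ $\mathfrak{o}(e_{i+1})$ shows the composite $E_{i-1} \to E_i \to E_{i+1}$ vanishes exactly when the two orientations disagree. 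This matches the presentation of $\mcp(\Gamma_J)$ by idempotents and arrows. For $R_{\mathbf{E}_{R(J)}}$, Theorem~\ref{thm:an} applies with the insertions $\sigma_i$ chosen to be the inclusions $F_{i,i+1} \cup S_{i+1} \hookrightarrow F_{i+1,i+2}$ and $<$ the natural order on $S_{i+1}$. Unwinding the definition of $m_{\sigma_i}$, the incidence graph $I_{\sigma_i,<}$ is a chain: the smallest element of $S_{i+1}$ links to its cyclic predecessor in $F_{i,i+1}$ (which is the last element of $S_i$, namely the vertex indexed by $k_i$), and each successive element of $S_{i+1}$ links to its natural predecessor. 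Taking the union over $i$ produces the $A_n$ chain whose edges are canonically bijective with those of $\Gamma_J$.

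Matching the two presentations then comes down to sending the idempotent of $E_i$ to the idempotent at the $i$-th vanishing thimble and each arrow of $\Gamma_J$ to the corresponding edge of the vanishing tree. The main obstacle is verifying that composition of two consecutive vanishing cycles across a circuit degeneration $C_i$ vanishes in the Fukaya-Seidel algebra exactly when the orientations at the intermediate vertex disagree. This should follow from the contour analysis in the proof of Theorem~\ref{thm:an}: two vanishing thimbles that pinch off on opposite sides of the central cycle in Figure~\ref{fig:cont} cannot be concatenated into a single cycle in the nearby smooth fiber, which is precisely the case of an orientation reversal; conversely, same-side pinches produce a composable pair. Once this geometric check is done, both algebras have identical idempotents, generators, and relations, and the isomorphism is immediate—crucially, only the ungraded structure needs to be matched, since the authors have set aside the grading.
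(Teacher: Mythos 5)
The paper does not supply a proof for this proposition, so there is no argument to compare against; I can only evaluate your proposal on its own terms. Your high-level strategy — compute both Yoneda algebras and identify each with the path algebra of $\Gamma_J$ (up to opposite) — is the natural one, and your treatment of the algebraic side $R_{\mathbf{E}_J}$ is reasonable in outline.

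However, there is a concrete gap on the geometric side that I believe is fatal as stated: you claim that for the radar screen $R(J)$ the incidence graph $I_{\sigma_i,<}$ is always a chain (with $m_{\sigma_i}(s_k) = s_{k-1}$), so that the total vanishing tree is the $A_n$ path graph. But in the one-variable setting the vanishing thimbles $L_i$ only meet at the reference fiber, so $\Hom(L_i,L_j)$ in $\fs{\mathbf{w}}$ is spanned by $V_i\cap V_j$; for a chain tree $V_i\cap V_j = \emptyset$ whenever $|i-j| > 1$, forcing $\dim R_{\mathbf{E}_{R(J)}} = 2n-1$ for every $J$. On the other hand, when $J = \emptyset$ the quiver $\Gamma_J$ is the linearly oriented $A_n$, the perversity is identically zero, $\mathbf{E}_J$ consists of the projectives $P_i$, and $\dim R_{\mathbf{E}_J} = \dim\End(\oplus P_i) = n(n+1)/2$. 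These disagree for $n > 2$, so the tree cannot be a chain independently of $J$. Tracing through the definition of $m_\sigma$ (with the forward cyclic interval convention, which is the only one giving uniqueness), the inclusion $\sigma_i$ in fact produces a star centered at the last element of $F_{i,i+1}$, so the global vanishing tree is a caterpillar whose branching pattern varies with $J$; for $J = \emptyset$ one gets a star on $n+1$ vertices, recovering dimension $n(n+1)/2$, while for $J = \{2,\ldots,n\}$ one gets a chain, recovering $2n-1$. In other words, the orientation data of $\Gamma_J$ is encoded in the branching of the tree itself, not (as your last paragraph suggests) in whether length-two compositions along a fixed chain vanish. A correct proof would therefore have to identify the tree shape associated to $R(J)$ as a function of $J$, show that the resulting directed intersection algebra has the same Hom-dimensions as $R_{\mathbf{E}_J}$, and then match the (in this setting essentially forced) composition laws.

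A secondary concern: you inherit what appears to be an indexing slip in the paper ($\sigma_i\colon F_{i,i+1}\cup S_{i+1}\to F_{i+1,i+2}$ is not size-consistent with the stated $F$'s; it should presumably be $F_{i,i+1}\cup S_i\to F_{i+1,i+2}$). Since the identification of the tree hinges on that definition, you should resolve the indexing before computing $m_{\sigma_i}$.
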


This proposition suggests the space $\mlg{A}{A^\prime}$ is connected to the space of stability conditions for $\mcd$. In the $A_n$ case, near maximal degeneration points, we obtain $T$-structures for the triangulated category of the LG-model which relate directly to abelian categories of directed $A_n$ quivers. In other words, we have categorified the bijection between fixed points of the monotone path stack and directed $A_n$ quivers to an equivalence between an exceptional collection of a degeneration near the fixed point and an exceptional collection naturally associated to the directed quiver. Moving from one fixed point to another along certain edges of the monotone path polytope crosses a wall in the norm stratification which results in Coxeter functors, or tiltings, of the ambient triangulated category. 

We end the section on the $A_n$ case by a brief comment on homological mirror symmetry. It is known that the homological mirror category for $A_n$ is the graded derived category of singularities for $z^n : \C \to \C$ \cite{HV}. One can view this category as a weighted divisor blow-up of the origin in $\C$ with weight $n$. The monotone path associated to $J$ may then be viewed as mirror to a sequence of $m$ blow ups with weights $(k_{i + 1} - k_i)$. This perspective fits well with birational mirror symmetry landscape discussed in \cite{DKK} and \cite{katzarkov}.

\section{Three point blow up of $\p^2$}

We conclude this paper with an example of a different flavor than previous sections. Throughout, let $X_3$ denote a smooth del Pezzo surface of degree 6; that is, a blow up of $\mathbb{P}^2$ at three distinct non-collinear points. This space is mirror (and isomorphic) to $X_{\examplep{2}}$ given in section \ref{sec:toric}. The case of degree 7 was considered in \cite[Section 5]{DKK}.  Recall that $\textrm{Pic}(X_3)\otimes\mathbb{R}$ is spanned by the pull-back of the hyperplane class and the exceptional divisors $E_1$, $E_2$, $E_3$ corresponding to the blown-up points, and that the effective cone $\textrm{Eff}(X_3)$ is generated by $E_1$, $E_2$, $E_3$, along with the pull-backs of the lines through the pairs of points, $E_{12}$, $E_{13}$, $E_{23}$. The effective cone admits a chamber decomposition into Zariski chambers, with each maximal chamber corresponding to a birational model obtained from $X_3$ by birational contractions; moreover, the codimension $1$ external walls of $\textrm{Eff}(X_3)$, equipped with this decomposition, correspond to Mori fibrations obtained from $X_3$, and the codimension 2 external walls correspond to Sarkisov links between the fibrations. We refer to \cite{hacon} for a general discussion of Mori fibrations and Sarkisov links from the perspective of chambers. 

The structure of the external walls of $\textrm{Eff}(X_3)$ was considered in particular by Kaloghiros in \cite[Example 4.7]{kalo},  as a special case of a substantially more general result concerning codimension 3 external walls and relations amongst Sarkisov links. It is convenient to consider a dual graph $\Gamma_3$, with vertices corresponding to the codimension 1 external walls (i.e. the Mori fibrations) and edges corresponding to the codimension 2 external walls (i.e. Sarkisov links). A picture of this graph appears in \cite[Figure 6]{kalo}. By inspection this graph is observed to be the edge graph of the 3-dimensional associahedron. This is consistent with toric mirror symmetry and the results of \cite[Section 5]{DKK}, as the associahedron appears as a facet of the secondary polytope of the point configuration $\example{2} \subset \mathbb{Z}^2$ which is the Batyrev mirror of $X_3$.

\begin{figure}
\includegraphics[scale=.23]{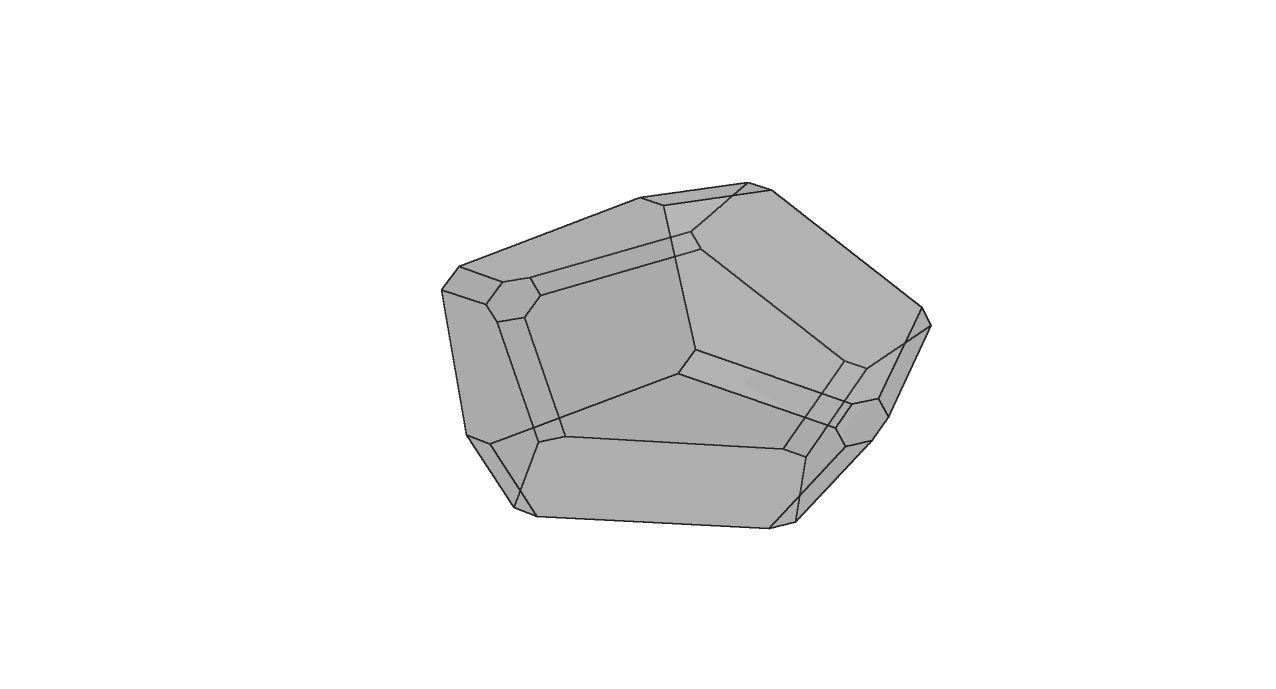}
\caption{\label{fig:mpp}The monotone path polytope for $(\examplep{2}, \example{2})$}
\end{figure}

As noted in \cite{kalo} the graph $\Gamma_3$ has 14 vertices, which correspond to Mori fibrations as follows:
\begin{itemize}
\item[(i)] 2 vertices correspond to to the trivial fibration $\mathbb{P}^2\rightarrow\{\textrm{pt}\}$, where $\mathbb{P}^2$ is obtained from $X_3$ by blowing down $E_1,E_2,E_3$, respectively $E_{12},E_{13},E_{2,3}$. 
\item[(ii)] 6 vertices correspond each to the fibration $\mathbb{F}_1\rightarrow\mathbb{P}^1$, where the map $X_3\rightarrow \mathbb{F}_1$ factors through the blow down of one of $E_1,E_2,E_3,E_{12},E_{13},E_{23}$. 
\item[(iii)] 6 vertices correspond each to the fibration $\mathbb{P}^1\times\mathbb{P}^1\rightarrow\mathbb{P}^1$, where the map $X_3\rightarrow\mathbb{P}^1\times\mathbb{P}^1$  factors through a blow down of one of $E_1,E_2,E_3$, and one of the two projections $\mathbb{P}^1\times\mathbb{P}^1\rightarrow\mathbb{P}^1$ is fixed. 

\end{itemize}
On the other hand, the monotone path polytope of the secondary polytope of $A$ with respect to the $\{0\}$-sharpening is of small enough complexity to be constructed via software. A picture of the resulting truncated associahedron appears in Figure \ref{fig:mpp}.  We observe that it has 36 vertices. Qualitatively, they correspond to the possible choices in the above description of $\Gamma_3$. 
\begin{itemize}
\item[(i)] 12 vertices correspond to the trivial fibration $\mathbb{P}^2\rightarrow\{\textrm{pt}\}$, where $X_3\rightarrow\mathbb{P}^2$ is one of the six ordered blow-downs of $E_1,E_2,E_3$, or one of the six ordered blow-downs of $E_{12},E_{13},E_{23}$. 
\item[(ii)] 12 vertices correspond to the fibration $\mathbb{F}_1\rightarrow\mathbb{P}^1$, where the map $X_3\rightarrow\mathbb{F}_1$ is given by an ordered blow-down of two of $E_1,E_2,E_3$, or an ordered blow-down of two of $E_{12},E_{13},E_{23}$.
\item[(iii)] 12 vertices correspond to the fibration $\mathbb{P}^1\times\mathbb{P}^1\rightarrow\mathbb{P}^1$, where the map $X_3\rightarrow\mathbb{P}^1\times\mathbb{P}^1$  factors through a blow down of one of $E_1,E_2,E_3$, and a blow down of one of $E_{12},E_{13},E_{23}$ not disjoint to $E_i$, and one of the two projections $\mathbb{P}^1\times\mathbb{P}^1\rightarrow\mathbb{P}^1$ is fixed. \end{itemize}
We observe that a vertex of the dual graph representing a Mori fibration is replaced with the collection of full runs of the minimal model program on $X_3$ whose last birational map is that Mori fibration. As was conjectured in \cite{DKK}, the semi-orthogonal decompositions of $D^b (X_3)$ arising from such runs are then conjectured to yield equivalent subcategories to those arising from the maximal degenerations of the mirror LG model.

We conclude with a brief discussion of prospects for extending beyond the toric case, and in particular to del Pezzo surfaces $X_k$ of degrees 1 through 6. The birational geometry of these surfaces is classical, though intricately structured \cite{manin}.  Motivated by the Hori-Vafa ansatz, \cite{AKO} posited the mirror in each case to be a Landau-Ginzburg model $f_k : Y_k \rightarrow \mathbb{P}^1$ of a rational elliptic surface $Y_k$ with prescribed fiber at $\infty$. They verified homological mirror symmetry in the form $D^b{X_k}\cong \fs{f_k}$; however, the identification of the K\"{a}hler moduli of $X_k$ with the complex moduli of $Y_k$ was not pursued.  This identification was completed in unpublished work of Pantev \cite{pantev}. 

In general, if $f: Y \to \mathbb{P}^1$ is a compactified LG model, results from \cite{KKSP} show that the complex  $T_{Y,Y_\infty} \to   f^*(T_{\mathbb{P}^1,\infty})$ defining perturbations of $f$ that fix the fiber at infinity can be integrated to produce a smooth moduli stack $\mcm$ of LG models. It is not hard to see that, when $f$ arises as a sharpened pencil, the quotient of $\mcm$ by the action of $\C^* \times \C$ naturally embeds as a substack of $\mlg{A}{A^\prime}$. In the toric cases, $Y_k$ can be obtained from the Batyrev mirror family by explicit blow-ups and we have seen that $\mlg{A}{A^\prime}$ is a natural geometric compactification of the complex moduli of the Batyrev mirror. While we suspect a similar nested compactification exists in the non-toric cases, they do not appear to have been studied from this vantage point. However, see \cite{looijenga} for a thorough study of compact moduli of rational elliptic surfaces. The recent investigations of Donaldson \cite{donaldson} regarding $K-$ and $b-$ stability of Fano manifolds will be relevant, replacing the role that classical geometric invariant theory plays in constructing the chamber decomposition on the effective cone. 

The above considerations provide a convenient way of studying surfaces whose derived category is close to being generated by an exceptional collection. In particular our analysis suggests the following conjecture.

\begin{conj} The derived category of the Barlow surface is not generated by an exceptional collection.
\end{conj}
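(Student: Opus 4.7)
The plan is to build on the Böhning-Graf von Bothmer-Sosna construction of a length-$11$ exceptional collection in $D^b(S)$, whose orthogonal complement $\mathcal{A}$ is known to have vanishing numerical Grothendieck group and vanishing Hochschild homology. Since Noether's formula gives $\dim HH_\bullet(D^b(S)) = 1 + 9 + 1 = 11$, matching the length of that collection, no numerical or Hochschild-level invariant can detect whether $\mathcal{A}$ is zero; the conjecture therefore reduces to exhibiting a nonzero object in the phantom $\mathcal{A}$ by finer means.

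I would attack this through the Landau-Ginzburg mirror. First, one must identify a candidate $f : Y \to \C$ mirror to $S$, extending the Auroux-Katzarkov-Orlov prescription for del Pezzos and the non-toric compactifications of \cite{KKSP} to the Barlow case. Next, by constructing a partial compactification $\mathcal{M}_S$ of the moduli of such LG models, analogous to the toric $\mlg{A}{A^\prime}$, I would apply a non-toric extension of Theorem \ref{thm:so} at a boundary stratum of $\mathcal{M}_S$ to produce a distinguished basis (radar screen) on the Fukaya-Seidel side. This basis furnishes a candidate Lagrangian thimble $L$ not in the span of the eleven exceptional objects predicted by the corresponding toric-like degeneration; the claim to establish is that $L$ corresponds under homological mirror symmetry to a nonzero object in the phantom $\mathcal{A} \subset D^b(S)$.

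The main obstacle splits into two parts. First, $S$ is not toric, so the machinery of Section \ref{sec:toric} must be developed in a broader setting: one needs a genuine moduli stack for LG models whose base is no longer a rational curve configuration, building on Pantev's unpublished K\"ahler/complex moduli identification and on \cite{looijenga} for compactifications of families of rational elliptic surfaces. Second and more seriously, one needs a \emph{phantom-detecting} invariant on the Fukaya-Seidel side, something finer than Hochschild homology, that distinguishes $L$ from zero. A natural candidate is the irregular Hodge filtration and the non-commutative Hodge structures at infinity developed in \cite{KKSP}, together with their period maps, which carry arithmetic information beyond rational Hodge theory and are sensitive to torsion phenomena in $K$-theory of the type that can arise for a determinantal Barlow surface. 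Demonstrating that $L$ pairs nontrivially against such a refined invariant, and that this pairing is stable under the mutations of statement (ii) of Theorem \ref{thm:so} (so that the conclusion is independent of the chosen exceptional collection), is the crux of the argument; until such an invariant is constructed, any approach along these lines only reconfirms the known vanishing of Hochschild invariants and falls short of ruling out the phantom.
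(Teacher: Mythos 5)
The statement you are addressing is stated in the paper as a \emph{conjecture}, with no proof given: the authors merely remark that their analysis ``suggests'' it and that its proof ``will lead to examples of nontrivial categories with trivial $K$-theory.'' So there is no paper proof to compare against, and your submission, which you yourself frame as a research roadmap rather than an argument, should be read in that light. What you have written is not a proof, and you say so explicitly in the final sentence; that honesty is appropriate, because the crux you identify --- the construction of an invariant on the Fukaya--Seidel side that is finer than Hochschild homology and the numerical Grothendieck group, and that can detect a nonzero phantom --- is precisely the missing ingredient. Your framing of the problem is consistent with the paper's: the remark about ``nontrivial categories with trivial $K$-theory'' is exactly the phantom-category phenomenon you invoke via the B\"ohning--Graf von Bothmer--Sosna length-$11$ collection, and the idea of using a nontoric analogue of $\mlg{A}{A'}$ with a radar-screen distinguished basis to locate a thimble outside the span of the eleven exceptional objects is in the spirit of what Sections 2--4 of the paper develop in the toric and del Pezzo settings.

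Beyond the absence of the detecting invariant, there is one further gap you gesture at but do not resolve. Even granting a nonzero phantom $\mathcal{A}$ orthogonal to one particular length-$11$ collection, the conjecture asserts that \emph{no} exceptional collection generates $D^b(S)$. Your appeal to mutation invariance via part (ii) of Theorem~\ref{thm:so} would handle collections in the braid-group orbit of a fixed one, but it is not known in general that all full (or maximal-length) exceptional collections on a given surface lie in a single mutation orbit; this transitivity is itself a hard open problem, and without it the argument would only rule out generation by collections reachable from the chosen starting point. A complete approach would either need to address this directly, or replace the mutation argument with an intrinsic obstruction (for instance, an additive-invariant or categorical-dimension argument) that does not reference a particular collection at all.
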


The proof of this conjecture will lead to examples of nontrivial categories with trivial K - theory.

\bibliography{thebib}
\bibliographystyle{plain}
\end{document}